\documentclass [11pt,reqno]{amsart}
\usepackage{amsmath,amssymb,verbatim,geometry,color}
\usepackage[all]{xy}

\usepackage{mathrsfs}
\usepackage[backref,pagebackref,pdftex,hyperindex]{hyperref}
\usepackage[pdftex]{graphicx}

\usepackage{tikz}
\usepackage{tikz-cd}

\def\XXint#1#2#3{{\setbox0=\hbox{$#1{#2#3}{\int}$ }
\vcenter{\hbox{$#2#3$ }}\kern-.6\wd0}}

\newcommand{\B}{\mathbb{B}}
\newcommand{\C}{\mathbb{C}}
\newcommand{\G}{\mathbb{G}}

\newcommand{\N}{\mathbb{N}}
\renewcommand{\P}{\mathbb{P}}
 
 \newcommand{\R}{\mathbb{R}}
 \newcommand{\Z}{\mathbb{Z}}

\newcommand{\cA}{\mathcal{A}}

\newcommand{\cD}{\mathcal{D}}
\newcommand{\cE}{\mathcal{E}}

\newcommand{\cX}{\mathcal{X}}

\newcommand{\om}{\omega}

\newcommand{\p}{\psi}

\DeclareMathOperator{\jj}{J}

\DeclareMathOperator{\DF}{DF}

\DeclareMathOperator{\Exc}{Exc}

\DeclareMathOperator{\Amp}{Amp}

\DeclareMathOperator{\Vol}{Vol}

\DeclareMathOperator{\ord}{ord}

\DeclareMathOperator{\PSH}{PSH}

\DeclareMathOperator{\tr}{tr}

\DeclareMathOperator{\Ric}{Ric}

\newcommand{\ddc}{dd^c}
\newcommand{\dc}{d^c}

\newcommand{\NA}{\mathrm{NA}}

\numberwithin{equation}{section}       % Number formulas within sections
\newtheorem{thm}{Theorem}[section]
\newtheorem{prop}[thm] {Proposition}
\newtheorem{defi}[thm] {Definition}
\newtheorem{lem}[thm] {Lemma}

\newtheorem{prop-def}[thm]{Proposition-Definition}

\newtheorem{rmk}[thm]{Remark}

\newtheorem{mainthm}{Theorem}

\makeatletter
\newtheorem*{rep@theo}{\rep@title}
\newcommand{\newreptheo}[2]{%
\newenvironment{rep#1}[1]{%
 \def\rep@title{#2 \ref{##1}}%
 \begin{rep@theo}}%
 {\end{rep@theo}}}
\makeatother

\newreptheo{theo}{Theorem}

\newreptheo{conjecture}{Conjecture}

\newreptheo{corol}{Corollary}

\newreptheo{Defin}{Definition}

 \theoremstyle{plain}
\newtheorem*{namedthm}{\namedthmname}
\newcounter{namedthm}

\makeatletter

\theoremstyle{remark}

 \usepackage{hyperref}
\hypersetup{
    unicode=false,        
    pdftoolbar=true,      
    pdfmenubar=true,       
    pdffitwindow=false,     
    pdfstartview={FitH},    
    pdftitle={On the transcendental YTD Conjecture},    
    pdfauthor={Trusiani},     
    colorlinks=true,       
   linkcolor=blue,          
    citecolor=blue,        
    filecolor=black,      
    urlcolor=blue}

\title[On the transcendental YTD Conjecture]{On the transcendental Yau-Tian-Donaldson Conjecture}
\date{} % Activate to display a given date or no date (if empty),
\author{Antonio Trusiani}

\address{Università di Roma Tor Vergata\\
Via della Ricerca Scientifica 1, 00133\\
Roma, Italy}
\email{trusiani@mat.uniroma2.it}

\begin{document}

\maketitle
 
\begin{abstract}
    We prove the (uniform) Yau-Tian-Donaldson Conjecture for transcendental K\"ahler classes in the case of trivial automorphisms. Indeed, we show the existence of \emph{Special} K\"ahler Fujita Approximations of big cohomology classes associated to big test configurations, establishing the equivalence between the uniform $K$-stability and the strengthened version for models. More generally, we show that any big cohomology class on a compact K\"ahler manifold admits such Special K\"ahler Fujita Approximations: the volume and the analogue of the Riemann-Roch coefficient of the big class are both approximated by those of K\"ahler classes on higher compactifications.
\end{abstract}

\section{Introduction}
The main goal of this manuscript is to establish the following solution to the transcendental Yau-Tian-Donaldson Conjecture (\cite{Yau87, Tian97, Don02}).
\begin{mainthm}[{Solution to the Yau-Tian-Donaldson Conjecture, case with trivial automorphisms}]\label{thmA}
    Let $\alpha$ be a K\"ahler class on a compact K\"ahler manifold $X$.
    Then the following are equivalent:
    \begin{itemize}
        \item[i)] there exists a unique cscK metric in $\alpha$;
        \item[ii)] $(X,\alpha)$ is uniformly $K$-stable. 
    \end{itemize}
\end{mainthm}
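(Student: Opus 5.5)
The direction i)$\Rightarrow$ii) I would treat as known. By Chen--Cheng, the existence of a cscK metric in $\alpha$ with trivial automorphisms is equivalent to the coercivity of the Mabuchi functional $M$ on $\mathcal{E}^1$. Coercivity of $M$ then implies uniform $K$-stability. For this I would use the slope formula: for any (possibly transcendental) test configuration $(\mathcal{X},\mathcal{A})$, the asymptotic slope of $M$ along the associated geodesic ray is bounded above by $\DF(\mathcal{X},\mathcal{A})$. The slope of the $J$-functional along the same ray equals the non-Archimedean $\jj^{\NA}$. Combining these gives $\DF\ge \delta\, \jj^{\NA}$, which is the result of Sjöström Dyrefelt and Dervan--Ross in the transcendental setting.

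The substance is ii)$\Rightarrow$i), and I would argue by contradiction, following the Berman--Boucksom--Jonsson variational strategy. Suppose $M$ is not coercive. Then by Chen--Cheng there is a unit-speed finite-energy geodesic ray $(\phi_t)$ in $\mathcal{E}^1$ along which $M(\phi_t)/t \le 0$ while $\jj(\phi_t)\sim t$. Such a ray corresponds to a maximal psh function $\Phi$ on $X\times \Delta^*$, equivalently to a non-Archimedean metric. I would approximate this ray by rays coming from test configurations. Concretely, I would take the multiplier ideal sheaves $\mathcal{J}(m\Phi)$, blow them up, and obtain test configurations $\mathcal{X}_m$. On each $\mathcal{X}_m$ the relevant cohomology class is a priori only big on the total space, not Kähler, so these are ``big test configurations''.

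The key step is then the Special Kähler Fujita Approximation announced in the abstract. Given a big class $\beta$ on the compactified total space, I would pass to a further modification $\pi:\mathcal{Y}\to\mathcal{X}$ carrying a Kähler class $\gamma\le \pi^*\beta$ such that two things hold. First, the volume $\gamma^{n+1}$ approximates $\Vol(\beta)$. Second, the Riemann--Roch-type coefficient $K\cdot\gamma^n$, the analogue of the second coefficient, approximates that of $\beta$. With this, one can define the Donaldson--Futaki invariant of a big test configuration as a limit of honest invariants $\DF(\mathcal{Y},\gamma)$ of genuine Kähler test configurations. Uniform $K$-stability for models then transfers to these big ones, giving $\DF\ge\delta\,\jj^{\NA}$ for the approximants.

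Finally, as in BBJ, I would show two convergence statements for the approximants: $\jj^{\NA}$ converges to the slope of $\jj$ along the ray, and $\limsup \DF$ is at most the slope of $M$ along the ray. The latter uses the entropy part of $M$ and the lower semicontinuity of the relative entropy under the multiplier ideal approximation. Together these contradict $\DF\ge\delta\jj^{\NA}>0$. Uniqueness then follows from Berman--Berndtsson, given trivial automorphisms.

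I expect the main obstacle to be the Fujita approximation step, specifically the approximation of the Riemann--Roch coefficient. Volume approximation follows from Boucksom's transcendental Fujita approximation via Demailly's regularization. Controlling $K\cdot\gamma^n$, however, requires choosing the regularization with analytic singularities so that the mixed intersections against the canonical class converge. I would try first to use the non-pluripolar product and the continuity of mixed non-pluripolar masses along sequences decreasing in capacity, with the singularity types chosen to have minimal singularities up to $\varepsilon$.
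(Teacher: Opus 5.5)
Your top-level architecture is the paper's. The direction (i)$\Rightarrow$(ii) is quoted from Sj\"ostr\"om Dyrefelt and Dervan--Ross. The direction (ii)$\Rightarrow$(i) is reduced to showing that uniform $K$-stability implies uniform $K$-stability for big test configurations (``for models''). After that, the BBJ/Li-type variational argument you sketch is what the paper imports as Theorem~\ref{thm:Pietro_David}. There is a slip in your third paragraph: it is uniform $K$-stability for K\"ahler test configurations that must be transferred to big ones, not the version for models.

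\textbf{The gap.} It is in the step you flag as the main obstacle, and the tool you propose cannot close it. Write $K_{\mathcal{Y}_k}=\pi_k^*K_{\cX}+K_{\mathcal{Y}_k/\cX}$. Continuity of mixed non-pluripolar masses (this is Lemma~\ref{lem:ConvFuj}, via $d_S$-convergence of singularity types) only yields $\gamma_k^n\cdot\pi_k^*K_{\cX}\to\langle\beta^n\rangle\cdot K_{\cX}$. Non-pluripolar products charge no exceptional divisor, so they are blind to the term $\gamma_k^n\cdot K_{\mathcal{Y}_k/\cX}\geq 0$, which lives entirely on the exceptional locus of $\pi_k$. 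By Lemma~\ref{lem:Special_H}, being Special is \emph{equivalent} to this term tending to $0$. For a Fujita approximation obtained from Demailly regularization (log resolutions of essentially arbitrary ideals), nothing forces it to vanish; this is precisely the Boucksom--Jonsson entropy regularization problem. Because the term is nonnegative, what you get for free is only $\liminf_k\DF(\mathcal{Y}_k,\gamma_k)\geq\DF(\cX,\beta)$. That is the wrong direction for passing $\DF\geq\sigma\jj^{\NA}$ from K\"ahler to big test configurations.

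\textbf{What the paper does instead.} The missing idea is control of discrepancies. In the notation of Theorem~\ref{thmB}, with $\dim X=n$:
\begin{itemize}
\item First pass to a model where $\mathrm{E}_{\mathrm{nK}}(\alpha)=\mathrm{Supp}(E)$ for a divisor $E$. This is harmless since $\langle(p^*\alpha)^{n-1}\rangle\cdot K_{Y/X}=0$, and orthogonality gives $\langle\alpha^{n-1}\rangle\cdot E=0$. Fix $\psi$ with $\theta_\psi=T+[E]$ and $T\geq\omega$.
\item Use the canonical potentials $\varphi_k=(\phi_k+k_0\psi)/(k+k_0)$. Here $\phi_k$ has analytic singularities of type $\mathcal{J}(kV_\theta)$ with a loss of positivity $C\omega$ \emph{independent of $k$} (Lemma~\ref{lem:Uniformity}, from Popovici's gluing estimates).
\item This uniformity gives the key inclusion $\mathcal{J}(\mathcal{I}_k)\cdot\mathcal{O}_X(-k_0E)\subset\mathcal{I}_k$ for $\mathcal{I}_k=\mathcal{J}(kV_\theta)\cdot\mathcal{O}_X(-k_0E)$.
\item The discrepancy bound on normalized blow-ups (Proposition~\ref{thm:Discrepancies_Bouck}) then gives $\beta_k^{n-1}\cdot q_k^*K_{Z_k/X}\leq nk_0\,\beta_k^{n-1}\cdot p_k^*E\to nk_0\langle\alpha^{n-1}\rangle\cdot E=0$.
\item The further resolution $Y_k\to Z_k$ contributes nothing, because $\beta_k$ is pulled back from $Z_k$. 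Finally the nef classes are perturbed to K\"ahler ones using $\psi$.
\end{itemize}

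\textbf{Secondary points your plan leaves open.}
\begin{itemize}
\item The approximants must themselves be test configurations, so the blow-up centres must be $\C^*$-invariant and contained in $\cX_0$. The paper replaces $\cD$ by $\cD+c\cX_0$ so that $\mathrm{E}_{\mathrm{nK}}\subset\cX_0$, and uses that the $\mathcal{J}(kV_\theta)$-construction is canonical. Demailly regularization is not canonical, so equivariance is not automatic there.
\item You need $\jj^{\NA}$ to converge along the approximation. The paper takes this from MPWN25.
\end{itemize}
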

It is well-known that both $(i), (ii)$ require that the identity component of the \emph{linear automorphism group} $\mathrm{LAut}(X)$ \cite{Fuj78} is trivial. Thus, Theorem \ref{thmA} establishes a link between Differential and Algebraic Geometry in these cases, partially extending \cite[Thm. A]{Tru26}. We refer to the companion paper \cite{Tru26} and references therein for the algebraic setting of Theorem \ref{thmA}, i.e. when the cohomology class $\alpha$ is integral; here, we exclusively consider the transcendental case.

Extending the integral case, (uniform) $K$-stability for general K\"ahler classes was first introduced and investigated in \cite{DR17, Sjo18}, where the authors independently showed the implication $(i)\Rightarrow (ii)$ of Theorem \ref{thmA}. The reader may also be interested in the subsequent papers \cite{Der18, Sjo20} where a (geodesic, equivariant) notion of $K$-polystability has been proved to be necessary for the existence of a cscK metric in the case of general linear automorphism groups.

Note that a uniform version of $K$-stability is expected to be necessary for characterizing the existence of a unique cscK metric, and hence the correspondence in Theorem \ref{thmA} is suspected to be optimal (cf. \cite{ACGTF08, Hat21, Liu26}, chronologically).

\subsection*{Special K\"ahler Fujita Approximations} As better explained below, following \cite{Li21, Tru26, MP25, MPWN25}, a proof of Theorem \ref{thmA} can be deduced from the following refinement of the renowned Fujita Approximation Theorem \cite{Fuj94, Bou02}.
\begin{mainthm}[{On the existence of Special K\"ahler Fujita Approximations}]\label{thmB}
    Let $\alpha$ be a big cohomology class on an $n$-dimensional compact K\"ahler manifold. Then there are modifications\footnote{The morphisms $p_k$ can be chosen to be given by a sequence of blowups along smooth centers.} $p_k:Y_k\to X$, K\"ahler classes $\beta_k$ and effective $\R$-divisors $E_k$ such that
    \begin{itemize}
        \item[i)] $p_k^*\alpha=\beta_k+\{E_k\}$;
        \item[ii)] $\beta_k^n\longrightarrow \Vol(\alpha)$ as $k\to +\infty$;
        \item[iii)] $\beta_k^{n-1}\cdot K_{Y_k}\longrightarrow \langle \alpha^{n-1}\rangle\cdot K_X$ as $k\to +\infty$. 
    \end{itemize}
\end{mainthm}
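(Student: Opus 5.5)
The plan is to run Boucksom's transcendental Fujita approximation \cite{Bou02}, but with currents whose singularities are chosen more carefully, and then to check that the mixed intersection $\beta_k^{n-1}\cdot K_{Y_k}$ also converges.

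\textbf{Step 1: currents with analytic singularities.} Fix a smooth representative $\theta\in\alpha$. Let $V_\theta$ be the envelope of minimal singularities. By Demailly's regularization, choose $\theta$-psh functions $\varphi_k$ with the following properties:
\begin{itemize}
\item each $\varphi_k$ has analytic singularities along ideals $\mathcal{I}_k$;
\item $\theta+\ddc\varphi_k\ge -\epsilon_k\omega$ with $\epsilon_k\to0$;
\item the $\varphi_k$ decrease to $V_\theta$.
\end{itemize}
Since $\alpha$ is big, we can fix a Kähler current $T_0=\theta+\ddc\psi_0\ge \delta\omega$ with analytic singularities. Replacing $\varphi_k$ by $(1-t_k)\varphi_k+t_k\psi_0$ with $t_k\to0$ chosen so that $t_k\delta>\epsilon_k$, we get genuine Kähler currents $T_k$ with analytic singularities whose non-pluripolar masses converge to those of $V_\theta$.

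\textbf{Step 2: resolving the singularities.} Take $p_k:Y_k\to X$, a log resolution of $\mathcal{I}_k$ (and of the ideal of $\psi_0$) obtained by blowups along smooth centers. Then $p_k^*T_k=\gamma_k+[E_k]$, where $E_k$ is an effective $\R$-divisor and $\gamma_k$ is a smooth semipositive form that is big. To make $\gamma_k$ Kähler, subtract a small multiple $\eta_k F_k$ of a $p_k$-exceptional divisor $F_k$ with $-F_k$ $p_k$-ample. This gives $\beta_k=\{\gamma_k\}-\eta_k\{F_k\}$, which is Kähler, and we enlarge $E_k$ to $E_k+\eta_kF_k$. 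Taking $\eta_k\to0$ fast enough (depending on $k$), this yields (i).

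\textbf{Step 3: convergence of the volumes.} This is Boucksom's argument. One has $\beta_k^n\approx\int_{Y_k}\gamma_k^n=\int_{X\setminus \mathrm{Sing}}\langle T_k^n\rangle$. By monotonicity of non-pluripolar masses under decreasing limits toward minimal singularities (together with the $t_k$-perturbation), this tends to $\Vol(\alpha)=\langle\alpha^n\rangle$. That gives (ii).

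\textbf{Step 4: the canonical term (main obstacle).} Write $K_{Y_k}=p_k^*K_X+\sum_i a_{i}G_i$, where the $G_i$ are exceptional and the $a_i>0$ are discrepancies. Then
\[
\beta_k^{n-1}\cdot K_{Y_k}=\beta_k^{n-1}\cdot p_k^*K_X+\sum_i a_{i}\,\beta_k^{n-1}\cdot G_i .
\]
For the first term, $\beta_k^{n-1}\cdot p_k^*K_X$ tends to $\langle\alpha^{n-1}\rangle\cdot K_X$. To see this, write $K_X$ as a difference of Kähler classes and use that $\int\langle T_k^{n-1}\rangle\wedge\omega\to\langle\alpha^{n-1}\rangle\cdot\{\omega\}$, which follows from the same monotonicity and continuity of mixed non-pluripolar masses along decreasing sequences with minimal-singularity limits. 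The real difficulty is the exceptional terms $\beta_k^{n-1}\cdot G_i$. These need not vanish: $\gamma_k$ restricted to $G_i$ can carry mass, and resolving with more blowups only adds more divisors with positive discrepancy.

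To handle this, I would first try to choose $\varphi_k$ so that $\gamma_k$ is pulled back from a smaller model on which it vanishes on the exceptional locus. Concretely, take $\varphi_k=\frac1{m}\log\sum|s_j|^2$ built from Bergman-type sections of $m(\alpha\text{-adjoint})$. Then $\gamma_k$ is the pullback of a Fubini–Study form by the map defined by $\mathcal{I}_k$, and $\gamma_k^{n-1}\cdot G_i$ measures the $(n-1)$-volume of the image of $G_i$. That volume is controlled by the mass of $\langle T_k^{n-1}\rangle$ near the non-Kähler locus, which should tend to zero by the orthogonality-type estimate $\langle\alpha^{n-1}\rangle\cdot E\to0$ for the divisorial part (cf. the orthogonality of Zariski decompositions, BDPP). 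Combined with $\eta_k\to0$ and uniform bounds on the discrepancies relative to $\eta_k$, this gives (iii).

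The other technical point in this step is choosing the rate of $\eta_k$ after the resolution is fixed, so that the Kähler perturbation does not spoil the estimate.
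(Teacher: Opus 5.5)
Steps 1--3 and your treatment of $\beta_k^{n-1}\cdot p_k^*K_X$ are fine; they match Boucksom's construction and Lemma \ref{lem:Special_H}. Step 4, which carries the whole content of the theorem, has a genuine gap. You must show $\beta_k^{n-1}\cdot K_{Y_k/X}=\sum_i a_i\,\beta_k^{n-1}\cdot G_i\to 0$. Here the divisors $G_i$, their number and their discrepancies $a_i$ all change with $k$, and there is no a priori bound on them uniform in $k$.

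Orthogonality only controls divisors pulled back from a fixed model: for a fixed $E$ on $X$, $\beta_k^{n-1}\cdot p_k^*E=\int_X\langle\theta_{\varphi_k}^{n-1}\rangle\wedge\sigma_E\to\langle\alpha^{n-1}\rangle\cdot E$, with $\sigma_E\in\{E\}$ smooth. Your bridge between the two does not hold. You claim $\beta_k^{n-1}\cdot G_i$ is ``controlled by the mass of $\langle T_k^{n-1}\rangle$ near the non-K\"ahler locus''. But $\int_{G_i}\gamma_k^{n-1}$ is a cohomological quantity on $Y_k$, not a mass of the non-pluripolar product on $X$, which charges no pluripolar set such as $V(\mathcal{I}_k)$. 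Even if each term were small, the unbounded weights $a_i$ could compensate.

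The ``uniform bounds on the discrepancies relative to $\eta_k$'' are exactly the missing statement. Moreover $\eta_k$ plays no role there: it only enters through a perturbation that can be made negligible once $Y_k$ is fixed. Finally, sections of $m\alpha$ and a Fubini--Study map defined by $\mathcal{I}_k$ do not exist for a transcendental class.

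What is needed is a comparison $K_{Z_k/X}\le C\,\pi_k^*E$, with $C$ independent of $k$, on the model $\pi_k:Z_k\to X$ from which $\beta_k$ is pulled back. The paper obtains it in the following steps.

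First, pass to a model on which $\mathrm{E}_{\mathrm{nK}}(\alpha)=\mathrm{Supp}(E)$ is divisorial; by orthogonality this does not change $\tau_1$. Fix $\psi\in\mathcal{A}(X,\theta-\omega)$ singular exactly along $E$.

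Second, take $Z_k$ to be the normalized blow-up of $\mathcal{I}_k=\mathcal{J}(kV_\theta)\cdot\mathcal{O}_X(-k_0E)$. The residual current has bounded potentials there, so the divisors contracted by $Y_k\to Z_k$ contribute nothing by the projection formula. This is the correct form of your ``smaller model'' idea.

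Third, on $Z_k$ the discrepancy estimate of Proposition \ref{thm:Discrepancies_Bouck}, $\ord_F K_{Z_k/X}\le n(\ord_F\mathcal{I}_k-\ord_F\mathcal{J}(\mathcal{I}_k))$, reduces everything to the multiplier-ideal defect. The inclusion $\mathcal{J}(\mathcal{I}_k)\cdot\mathcal{O}_X(-k_0E)\subset\mathcal{I}_k$, valid for all $k$, bounds this defect by $k_0\ord_F E$. The inclusion comes from the Demailly--Popovici regularization with loss $k\theta+\ddc\phi_k\ge -C\omega$, $C$ independent of $k$ (Lemma \ref{lem:Uniformity}). It is applied twice, and the loss is absorbed by $k_0\psi$ with $k_0\ge 2C$.

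This yields $K_{Z_k/X}\le nk_0\pi_k^*E$, and orthogonality gives $\langle\alpha^{n-1}\rangle\cdot E=0$. Your Step 1 can reproduce exactly the paper's functions: take $\varphi_k$ to be the Demailly--Popovici approximant of type $(\mathcal{J}(kV_\theta),1/k)$, $\psi_0=\psi$ and $t_k=k_0/(k+k_0)$. What you are missing is the reason this particular choice makes (iii) true.
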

By \cite{Bou02}, there always exist data $(p_k:Y_k\to X, \beta_k,E_k)$ satisfying $(i)$ and $(ii)$, yielding K\"ahler Fujita Approximations of $\alpha$ and extending \cite{Fuj94}. Thus, the additional convergence $(iii)$ of the first Riemann–Roch coefficients is the new requirement; any K\"ahler Fujita Approximation of $\alpha$ that also satisfies $(iii)$ will be said to be \emph{Special}. A version of Theorem \ref{thmB} for integral big classes on smooth projective varieties was already proved in the companion paper \cite{Tru26}.

\subsection*{On the proofs of Theorems \ref{thmA}, \ref{thmB}}
Partly generalizing to the transcendental setting the Non-Archimedean formalism mainly developed by Boucksom-Jonsson and collaborators (see for instance \cite{BHJ17,BFJ15,BFJ16,BJ22,BJ25I,BJ23II, Li22b, Li21}), in \cite{MP25} Piccione showed that the existence of a unique cscK metric can be deduced from a strengthened version of uniform $K$-stability, i.e. the so-called uniform $K$-stability over $\cE^{1,\NA}$. Furthermore, in \cite{MPWN25}, the latter notion has been proved to be equivalent to the uniform $K$-stability \emph{for models} (see \cite{Li22b} for the algebraic version of such a result). Roughly speaking, while (uniform) $K$-stability requires the positivity of a certain weight, the \emph{Donaldson-Futaki invariant}, associated with \emph{K\"ahler test configurations} $(\cX,\cA+\cD)$, the $K$-stability for models asks that such positivity holds for \emph{big} test configurations, i.e. when the cohomology classes $\cA+\cD$ are big instead of K\"ahler (see section \ref{sec:2} for more details). Thus, similarly to \cite{Tru26}, Theorem \ref{thmB} solves the transcendental analogue of Boucksom-Jonsson's Regularization Conjecture on the Non-Archimedean Entropy \cite{BJ18}. In particular, the Donaldson-Futaki invariant of big test configurations can be continuously approximated by those of K\"ahler test configurations, and the uniform $K$-stability implies the uniform $K$-stability for models; hence, the findings in the aforementioned paper \cite{MPWN25} conclude the proof of Theorem \ref{thmA}. 
\medskip

Although the proof of Theorem \ref{thmB} follows the same strategy explored in \cite{Tru26}, there are some key differences due to the lack of algebraic tools/results. In \cite{Tru26} a canonical Fujita Approximation was constructed by blowing-up the base ideals $\mathfrak{b}_m$, and the Castelnuovo-Mumford regularity Theorem plus Nadel's vanishing Theorem were used to control the difference $\ord_F\mathfrak{b}_m-\ord_F\mathcal{J}\left(\mathfrak{b}_m\right)$ for any $F$ prime divisor over $X$. Here $\mathcal{J}\left(\mathfrak{b}_m\right)$ is the multiplier ideal sheaf of $\mathfrak{b}_m$. Finally, the key discrepancy bound on normalized blowups established in \cite[Prop. 3.1]{Tru26} was exploited to deduce that $\beta_k^{n-1}\cdot K_{Y_k/X}\to 0$ as $k\to +\infty$. In this manuscript, we start by observing that the analytic proof of such a key discrepancy bound adapts to compact K\"ahler manifolds (see Proposition \ref{thm:Discrepancies_Bouck}). Moreover, in Lemma \ref{lem:Special_H}, we prove that the convergence $\beta_k^{n-1}\cdot K_{Y_k/X}\to 0$ is equivalent to the required point $(iii)$ of Theorem \ref{thmB}. Thus, after a standard perturbation argument, the problem reduces to constructing a \emph{Nef} Fujita Approximation (i.e. the $\beta_k$ are nef classes) by blowing-up a sequence of ideals $\mathcal{I}_k$ for which we can control $\ord_F\mathcal{I}_k-\ord_F \mathcal{J}(\mathcal{I}_k)$. However, we cannot rely on the properties of the base ideals as in the algebraic setting. 

Thus, following \cite{DNTT24} and the author's previous works \cite{Tru20b, Tru20c}, we first observe that Nef Fujita Approximations can be encoded in suitable classes of singularities of $\theta$-plurisubharmonic functions, where $\theta\in \alpha$ is a fixed closed $(1,1)$-form (see Lemma \ref{lem:ClassicFujita}). This allows us to pass from the data $(Y_k,\beta_k)$ to elements $\varphi_k\in \PSH(X,\theta)$, gaining the possibility to work on a fixed variety with pluripotential-theoretical techniques. Using this perspective, we construct a Nef Fujita Approximation whose singularities are encoded in the ideal sheaves $\mathcal{I}_k:=\mathcal{J}(kV_\theta)\cdot \mathcal{O}_X(-k_0E)$. Here $k_0\in\N$ is fixed, $V_\theta$ is the largest non-positive $\theta$-plurisubharmonic function and $E$ is a prime divisor such that the \emph{Non-K\"ahler locus of $\alpha$} satisfies $\mathrm{E}_{\mathrm{nK}}(\alpha)=\mathrm{Supp}(E)$ (we can reduce to this setting by passing to a higher model). Next, the key part consists in proving that
\begin{equation}
    \label{eqn:Key}
    \mathcal{J}(\mathcal{I}_k)\cdot \mathcal{O}_X(-k_0E)\subset \mathcal{I}_k
\end{equation}
for any $k\in\N$. The calculations performed in \cite{Pop03} are essential in showing \eqref{eqn:Key} (see Lemma \ref{lem:Uniformity}); hence the aforementioned perspective, which interprets Nef Fujita Approximations in terms of classes of singularities in $\PSH(X,\theta)$, is particularly advantageous in the proof of Theorem \ref{thmB}.
\medskip

Let me also stress the recent solution to the \emph{Orthogonality Conjecture} (hence the differentiability of the volume function, the transcendental holomorphic Morse inequalities, and the duality between the pseudoeffective and the movable cone) established in \cite{Tos26} is not necessary to prove Theorem \ref{thmA}. Indeed, as explained in Remark \ref{rmk:Final}, for the proof of Theorem \ref{thmB} we only need to know that any $\gamma$ big class satisfies
$$
\langle \gamma^{n-1}\rangle\cdot F=0
$$
for any prime divisor $F$ contained in the non-K\"ahler locus of $\gamma$. Such a result for big test configurations can be deduced from \cite{Vu23,Nys24, MPWN25}.

\subsection*{Future developments} 
The most natural subsequent work of the present manuscript concerns proving a version of Theorem \ref{thmA} in the \textquotedblleft presence of automorphisms\textquotedblright. The Special K\"ahler Fujita Approximation of Theorem \ref{thmB} is constructed in a sufficiently canonical way to ensure the $\G$-invariance for a given algebraic group $\G$ acting on $(X,\alpha)$. Thus, similarly to \cite{Tru26}, it is plausible to believe that Theorem \ref{thmB} will provide a proof of the equivalence between an equivariant version of $K$-stability and its version for models. Therefore, the essential missing point here is to extend the findings of \cite{MP25, MPWN25} to the case with automorphisms.
\smallskip

Similarly, analogous results concerning the implication from \emph{weighted} versions of uniform $K$-stability for models to the existence of special metrics can be explored in the \emph{weighted} and singular setting in the future (see \cite{PTT23, BJT26, Sze25, PT26, Ino22, Lah19, Lah23, AJL23, DJL24, DJL25, HL25a, HL25b, BJ26} and references therein). Then, an adaptation of Theorem \ref{thmB} may again represent the bridge between the uniform weighted $K$-stability and the version for models, thus establishing the historically harder implication in the Yau-Tian-Donaldson correspondence.
\smallskip

Finally, a second version of \cite{DNTT24} will soon appear, where Theorem \ref{thmB} is exploited to obtain the convexity of the Mabuchi functional in big cohomology classes without extra hypotheses.

\subsection*{Structure of the paper}

In Section \ref{sec:2} all necessary preliminaries are introduced. Special K\"ahler Fujita Approximations are introduced in Section \ref{sec:4}, while Section \ref{sec:5} is dedicated to the proof of Theorem \ref{thmB}. Finally in Section \ref{sec:6} we explain in more detail how Theorem \ref{thmB} leads to Theorem \ref{thmA}.

\subsection*{Disclosure on the use of AI} The author used ChatGPT (GPT-$5.6$) for language editing, in particular to check grammar, style, and clarity of exposition. All mathematical arguments and statements are the sole responsibility of the author.

\section*{Acknowledgments}
The author thanks S. Boucksom for his mathematical comments and suggestions. The author is also grateful to S. Trapani for carefully reading drafts of the paper, to P. Piccione for fruitful discussions and to T.  Papazachariou for useful comments.
    
\section{Preliminaries}\label{sec:2}
Let $(X,\om)$ be a compact K\"ahler manifold of (complex) dimension $n$, where $\om$ is a fixed K\"ahler form. Set $\dc:= \frac{i}{4\pi}(\bar{\partial}-\partial)$ so that $\ddc=\frac{i}{2\pi}\partial \bar{\partial}$.

Recall that a function $u:X\to \R$ is said to be \emph{quasi-plurisubharmonic} (q-psh) if locally $u\overset{loc}{=}g+\varphi$ for $g\in C^{\infty}$ and $\varphi$ psh. The set of q-psh functions is equipped with the $L^1$-topology and with a natural partial order: $u$ is less singular than $v$ ($u\succcurlyeq v$) if $u+C\geq v$ for a constant $C$.

Given a smooth closed $(1,1)$-form $\theta$, a q-psh function $u$ such that $\theta_u:=\theta+\ddc u\geq 0$ as $(1,1)$-current is said to be $\theta$-psh and $\PSH(X,\theta)$ denotes the set of all these functions. Note that any closed and positive current $T$ representing the cohomology class $\alpha:=\{\theta\}\in \mathrm{H}^{1,1}(X,\R)$ can be written as $T=\theta_u$ for $u\in\PSH(X,\theta)$ and the choice of $u$ is unique modulo translation by constants. The class $\alpha=\{\theta\}$ is then said to be \emph{big} if $\PSH(X,\theta-\varepsilon\om)$ is not empty for any $\varepsilon>0$ small enough. Namely there are \emph{K\"ahler currents} in $\alpha$, i.e. closed and positive currents $T\in\alpha$ such that $T\geq \varepsilon \om$.

\subsection{Some properties of the non-pluripolar product}\label{ssec:Non_Pluripolar}
Following the pioneering works of Bedford-Taylor \cite{BT82, BT87}, in \cite{BEGZ10} the authors defined the \emph{non-pluripolar product}
$
\langle T_1\dots T_p\rangle
$
among closed and positive $(1,1)$-currents $T_1,\dots, T_p$. We refer to \cite{BEGZ10} and references therein for the definition and for a deep analysis of such a product. Here we just collect some of its properties.
\begin{prop}[{\cite[Prop. 1.4, Rem. 1.7, Thm. 1.8]{BEGZ10}}]\label{prop:NPP}
    Let $T_1,\dots,T_r$ be closed and positive currents. Then
    \begin{itemize}
        \item[i)] $(T_1,\dots,T_r)\longrightarrow\langle T_1\wedge \cdots \wedge T_r \rangle$ is symmetric, multilinear and does not put mass on pluripolar sets;
        \item[ii)] $\langle T_1\wedge \cdots \wedge T_r \rangle$ is a closed and positive $(r,r)$-current;
        \item[iii)] if $T_r$ is smooth then $\langle T_1\wedge \cdots \wedge T_r \rangle=\langle T_1\wedge \cdots \wedge T_{r-1}\rangle\wedge T_r$ as closed and positive $(r,r)$-currents;
        \item[iv)] if $p:Y\to X$ is a bimeromorphic holomorphic map (i.e. $p$ is a \emph{modification}) among K\"ahler manifolds then $\langle T_1\wedge \cdots \wedge T_r \rangle=p_*\langle p^*T_1\wedge \cdots \wedge p^*T_r \rangle$.
    \end{itemize}
\end{prop}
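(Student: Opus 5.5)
This proposition collects results from \cite{BEGZ10}, so the plan is to recall how each item follows from the construction of the non-pluripolar product, not to prove anything new. Write $T_j=\theta^j+\ddc u_j$ with $u_j$ quasi-psh. For $k\in\N$ set
\[
O_k:=\bigcap_j\{u_j>-k\},
\]
a plurifine open set. On $O_k$ the currents $T_j$ coincide with $\theta^j+\ddc\max(u_j,-k)$, whose potentials are locally bounded. There the Bedford--Taylor product is defined, and by plurifine locality the restrictions
\[
\mathbf 1_{O_k}\bigwedge_j\bigl(\theta^j+\ddc\max(u_j,-k)\bigr)
\]
are compatible as $k$ grows. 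The product $\langle T_1\wedge\cdots\wedge T_r\rangle$ is the increasing limit of these measures. It exists on a compact K\"ahler manifold because the total masses are uniformly bounded by cohomological quantities, such as $\int\bigwedge_j(T_j+C\om)\wedge\om^{n-r}$ after adding $C\om$ to make the potentials $\theta$-psh with uniform bounds.

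Item (i) follows directly from this construction. Symmetry and multilinearity hold on each $O_k$ because they hold for Bedford--Taylor products of bounded potentials. The only subtlety is additivity $T_1=T_1'+T_1''$, which one checks on the intersection of the corresponding plurifine sets. The product puts no mass on pluripolar sets because every pluripolar set $P$ satisfies
\[
P\cap\{u_j>-\infty\ \forall j\}\subset\bigcup_k\bigl(P\cap O_k\bigr),
\]
and Bedford--Taylor measures of bounded potentials do not charge pluripolar sets. The complement $\bigcup_j\{u_j=-\infty\}$ is excluded by definition.

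Item (iii) follows in the same way: on $O_k$, wedging with a smooth form commutes with the local Bedford--Taylor product.

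Item (iv) uses that $p$ is an isomorphism outside an analytic, hence pluripolar, set $Z\subset X$. Both sides then put no mass on $Z$, respectively on $p^{-1}(Z)$. On $X\setminus Z$ the sets $O_k$ correspond to the sets defined by the pulled-back potentials $u_j\circ p$, and the local products agree under the biholomorphism.

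I expect item (ii), closedness, to be the main obstacle. Positivity is clear, but $\mathbf 1_{O_k}$ times a closed current is generally not closed. The \cite{BEGZ10} argument works locally with psh potentials and compares $\mathbf 1_{O_k}\bigwedge_j\ddc\max(u_j,-k)$ with the closed currents $\bigwedge_j\ddc\max(u_j,-k)$. The difference is controlled by the mass near $\{u_j\le -k\}$, and this mass tends to zero because the total masses are bounded. The cleanest route is the Skoda--El Mir type extension theorem: one shows that the limit current has locally finite mass and is closed outside a closed complete pluripolar set, and then concludes that it extends as a closed current. The Skoda--El Mir step is the one I would reproduce in detail.
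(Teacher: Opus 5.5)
The paper gives no argument of its own here; it only cites \cite{BEGZ10}. Your sketches of (i), (iii), (iv) and of the global mass bound are a fair summary of that source. The gap is in (ii), which you rightly call the crux: both mechanisms you propose fail.

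\textbf{Why your closedness argument fails.} First, write $\Theta_k:=\bigwedge_j dd^c\max(u_j,-k)$. The difference $\Theta_k-\mathbf{1}_{O_k}\Theta_k$ does not tend to zero. On $\C^2$ take $T_1=dd^c\log|z_1|^2=[z_1=0]$. Then $\Theta_k$ is the pull-back of the normalized arclength measure on the circle $\{|z_1|^2=e^{-k}\}$. That circle lies outside $O_k=\{|z_1|^2>e^{-k}\}$, so $\mathbf{1}_{O_k}\Theta_k=0$, while $\Theta_k\to[z_1=0]$. Bounded total masses give no control on this pluripolar part.

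Second, Skoda--El Mir needs a \emph{closed} complete pluripolar set $E$, together with a current already known to be closed on the open set $X\setminus E$. The only candidate is $\{u=-\infty\}$ with $u=\sum_j u_j$. It is complete pluripolar but in general not closed. For example, let $u_1=\sum_m 2^{-m}\log|z-a_m|$ with $\{a_m\}$ dense. Then $u_1$ is unbounded near every point, so there is no nonempty open set on which the product is a Bedford--Taylor product and hence known to be closed. The sets $O_k$ are only plurifine open, so ``closed on $O_k$'' has no meaning.

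\textbf{How to repair it.} Rerun the \emph{proof} of Skoda--El Mir (Sibony's cut-off argument), with plurifine locality in place of the open set.

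Locally take $u_j\le 0$ and $u:=\sum_j u_j$, so that $\{u>-k\}\subset O_k$. Fix $\chi$ smooth and nondecreasing, with $\chi=0$ on $(-\infty,-2]$ and $\chi=1$ on $[-1,\infty)$, and set $f_k:=\chi(u/k)$. Since $f_k$ vanishes off $O_{2k}$, plurifine locality gives $f_k\Theta=f_k\Theta_{2k}$, where $\Theta:=\langle T_1\wedge\cdots\wedge T_r\rangle$. Because $\Theta_{2k}$ is closed, $d(f_k\Theta)=df_k\wedge\Theta_{2k}$.

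For a test form $\alpha$ supported in $K$, Cauchy--Schwarz gives
\[
|\langle d(f_k\Theta),\alpha\rangle|^2\lesssim \int_K df_k\wedge d^cf_k\wedge\Theta_{2k}\wedge\omega^{n-r-1}\cdot\int_{K\cap\{-2k<u<-k\}}\Theta\wedge\omega^{n-r}.
\]
\begin{itemize}
\item \emph{First factor.} Set $v:=\max(u,-2k)+2k\in[0,2k]$. Then $dv\wedge d^cv\le\frac12 dd^cv^2$. Integrating by parts against $\Theta_{2k}$, and using that $v=0$ off $O_{2k}$, the first factor is at most $Ck^{-2}\int_{K'}v^2\Theta\wedge\omega^{n-r}\le 4C\int_{K'}\Theta\wedge\omega^{n-r}$. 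This is finite by the local mass bound.
\item \emph{Second factor.} It tends to $0$, because $\Theta$ has locally finite mass and puts no mass on $\{u=-\infty\}=\bigcap_k\{u<-k\}$.
\end{itemize}
For the same reason $f_k\Theta\uparrow\Theta$, and hence $d\Theta=0$.
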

Moreover, within the same cohomology classes, the (total mass of the) non-pluripolar product respects the partial order of the q-psh functions. Namely,
\begin{equation}\label{eqn:Monotonicity}
    \int_X\langle T_1\wedge T_2\wedge \cdots \wedge T_n\rangle\leq \int_X\langle T_1'\wedge T_2\wedge \cdots \wedge T_n\rangle
\end{equation}
if $T_1,T_1',T_2,\dots, T_n$ are closed and positive $(1,1)$-currents and $T_1=\theta_u, T_1'=\theta_{u'}$ where $u\preccurlyeq u'$ (see \cite[Thm. 1.2]{WN17}, \cite[Thm. 1.1]{DDNL17b}, \cite[Thm. 1.16]{BEGZ10}).

\subsection{Volume as Non-Pluripolar Monge-Ampère mass.}\label{ssec:Volume}
The volume of a class $\alpha=\{\theta\}$ such that $\PSH(X,\theta)\neq \emptyset$ (i.e. $\alpha$ is \emph{pseudoeffective}) is defined as
$$
\Vol(\alpha):=\sup_{T\in\alpha}\int_X \langle T^n \rangle=\int_X \langle \theta_{V_\theta}^n \rangle 
$$
where $V_\theta:=\sup\{u\in\PSH(X,\theta)\, : \, u\leq 0\}$ is the least singular non-positive $\theta$-psh function and where the last equality follows from the monotonicity of the non-pluripolar product. We refer to \cite[Def. 1.3]{Bou02} for the original definition of volume of a big (more generally, psef) class and to \cite[Prop. 1.18]{BEGZ10} for a proof of the equivalence with the definition given in terms on the non-pluripolar product. We also recall that the volume produces a continuous function on the psef cone such that $\Vol(\alpha)>0$ if and only if $\alpha$ is big, and that if $\alpha$ is nef then $\Vol(\alpha)=\alpha^n$ \cite[Prop. 4.3]{Bou02}.

\subsection{Functions with analytic singularities}
Letting $\mathcal{I}\subset \mathcal{O}_X$ be a coherent ideal sheaf and $c>0$, a q-psh function $u$ is said to have \emph{analytic singularities of type $(\mathcal{I},c)$} if locally
$$
u\overset{\mathrm{loc}}{=} g+ c\log\Big(\sum_j\lvert f_j\rvert^2\Big)
$$
where $g$ is bounded while $(f_j)_j$ are local generators of the ideal $\mathcal{I}$. Observe that if $u$ has analytic singularities of type $(\mathcal{I},c)$ then it has analytic singularities of type $(\overline{\mathcal{I}},c)$ where $\overline{\mathcal{I}}$ is the integral closure of $\mathcal{I}$. Denote by $\mathcal{A}(X,\theta)$ the set of $\theta$-psh functions with analytic singularities. 
Any big class contains many functions with analytic singularities thanks to the renowned Regularization Theorem of Demailly \cite{Dem92} (cf. \cite[Thm. 3.2]{DP04}).

\subsection{Non-K\"ahler locus}\label{ssec:Non-Kahler}
Let $\alpha=\{\theta\}\in \mathrm{H}^{1,1}(X,\R)$ be a cohomology class. The \emph{non-K\"ahler locus} of $\alpha$ is defined as
$$
\mathrm{E}_{\mathrm{nK}}(\alpha):=\bigcap_{\mathcal{I}}V(\mathcal{I})
$$
where $\mathcal{I}$ varies among all the coherent ideal sheaves for which there exists a $(\theta-\varepsilon\om)$-psh function with analytic singularities of type $\left(\mathcal{I},c\right)$ for some $\varepsilon,c>0$, i.e. it is the intersection of all the singularities loci of all K\"ahler currents in $\alpha$ with analytic singularities.
Its complementary $\Amp(\alpha):=X\setminus \mathrm{E}_{\mathrm{nK}}(\alpha)$ is called \emph{Ample locus} of $\alpha$, and $\Amp(\alpha)\neq \emptyset$ if and only if $\alpha$ is big. The non-K\"ahler locus firstly appeared in \cite[Def. 3.16]{Bou04}. Using Demailly Regularization Theorem \cite{Dem92} and the strong Noetherian property, \cite[Thm. 3.17.(ii)]{Bou04} shows that if $\alpha$ is big then there exists $u\in\mathcal{A}(X,\theta-\varepsilon\om)$ with analytic singularities of type $\left(\mathcal{I},c\right)$ such that $\mathrm{E}_{\mathrm{nK}}(\alpha)=V(\mathcal{I})$. Moreover if $\alpha$ is big and nef then
\begin{equation}\label{eqn:Null_Locus}
    \mathrm{E}_{\mathrm{nK}}\left(\alpha\right)=\bigcup_{\alpha^{\dim V}\cdot V=0 } V
\end{equation}
where the union is over all analytic irreducible subset $V\subset X$ by \cite[Thm. 1.1]{CT15}.

\subsection{Orthogonality Conjecture}
Let $\alpha=\{\theta\}\in \mathrm{H}^{1,1}(X,\R)$ be a psef class. For any $r=1,\dots,n$ we denote by $\langle \alpha^r \rangle$ the cohomology class of the closed and positive $(r,r)$-current $\langle \theta_{V_\theta}^r\rangle$. The \emph{positive intersection product} $\langle \alpha^r\rangle$ has been introduced analytically in \cite{BDPP13} (following the PhD thesis \cite{BouThesis}). The definition given coincides with the original one thanks to the monotonicity \eqref{eqn:Monotonicity} and the discussion performed in \cite[Sec. 1.5]{BEGZ10}.

The following recent solution to the \emph{Orthogonality Conjecture} is a key ingredient in Theorem \ref{thmB}.
\begin{thm}[{\cite[Cor. 1.2, Cor. 1.3]{Tos26}}]\label{conj:Ort}
    Let $\alpha$ be a big class on a compact K\"ahler manifold $X$. Then
    $$
    \Vol(\alpha)= \langle \alpha^{n-1} \rangle\cdot \alpha.
    $$
    In particular a prime divisor $D$ belongs to the non-K\"ahler locus $\mathrm{E}_{\mathrm{nK}}(\alpha)$ if and only if
    \begin{equation}
        \label{eqn:Orthogonality}
        \langle\alpha^{n-1} \rangle\cdot D=0.
    \end{equation}
\end{thm}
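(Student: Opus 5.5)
The identity $\Vol(\alpha)=\langle\alpha^{n-1}\rangle\cdot\alpha$ is the deep part of the statement, and I would not try to reprove it. I would import it from \cite{Tos26} in the following form: the volume function is $C^1$ on the big cone, with
$$
\frac{d}{dt}\Big|_{t=0}\Vol(\alpha+t\gamma)=n\,\langle\alpha^{n-1}\rangle\cdot\gamma \qquad\text{for every } \gamma\in \mathrm{H}^{1,1}(X,\R).
$$
Taking $\gamma=\alpha$ and using homogeneity, $\Vol(\alpha+t\alpha)=(1+t)^n\Vol(\alpha)$, the derivative equals $n\Vol(\alpha)$, and the identity follows. In other words, the orthogonality statement and the differentiability of $\Vol$ are the same input, and that input is \cite{Tos26}.

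For the characterization of the divisorial part of $\mathrm{E}_{\mathrm{nK}}(\alpha)$, I would take $\gamma=\{D\}$ in the derivative formula. This gives
$$
\frac{d}{dt}\Big|_{t=0}\Vol(\alpha+tD)=n\,\langle\alpha^{n-1}\rangle\cdot D.
$$
I would then identify the left-hand side with $n$ times the restricted volume $\Vol_{X|D}(\alpha)$. This is the transcendental analogue of the Lazarsfeld–Musta\c{t}\u{a} and Boucksom–Favre–Jonsson formula, and it is part of the package proved alongside orthogonality. The restricted volume can be computed as the supremum over K\"ahler Fujita approximations $p^*\alpha=\beta+\{E\}$ of $\beta^{n-1}\cdot \widetilde D$, where $\widetilde D$ is the strict transform of $D$. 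Together with the non-negativity $\langle\alpha^{n-1}\rangle\cdot D\ge 0$, which holds because positive products are limits of push-forwards of products of K\"ahler classes on modifications, it therefore suffices to prove
$$
\Vol_{X|D}(\alpha)=0 \iff D\subset \mathrm{E}_{\mathrm{nK}}(\alpha).
$$

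\emph{Case $D\not\subset \mathrm{E}_{\mathrm{nK}}(\alpha)$.} Choose a K\"ahler current with analytic singularities whose singular locus does not contain $D$. Resolve it to get a Fujita approximation with $\widetilde D\not\subset\mathrm{Supp}(E)$. Then $\beta$ restricted to $\widetilde D$ is K\"ahler on a manifold of dimension $n-1$, so $\beta^{n-1}\cdot\widetilde D>0$ and the restricted volume is positive.

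\emph{Case $D\subset\mathrm{E}_{\mathrm{nK}}(\alpha)$.} This is the main obstacle. Here I would use the Collins–Tosatti description of the non-K\"ahler locus \eqref{eqn:Null_Locus} on the nef models $\beta_k$, and show that the mass of $\beta_k^{n-1}$ on $\widetilde D$ tends to zero. The idea is that $D$ lies in the null locus of every nef approximation in the limit. Concretely, one approximates $V_\theta$ by currents whose Lelong numbers or restricted masses along $D$ are controlled by Boucksom's results on the divisorial Zariski decomposition. If this direct route fails, I would fall back on showing directly that $t\mapsto\Vol(\alpha-tD)$ has vanishing right derivative at $0$ whenever $D\subset\mathrm{E}_{\mathrm{nK}}(\alpha)$; that is essentially the argument of \cite{Tos26} via transcendental Morse inequalities.
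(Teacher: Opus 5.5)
Your derivation of $\Vol(\alpha)=\langle\alpha^{n-1}\rangle\cdot\alpha$ is fine. The paper gives no argument of its own and simply quotes \cite[Cor. 1.2, Cor. 1.3]{Tos26}; importing the $C^1$ formula from the same source and differentiating $\Vol((1+t)\alpha)=(1+t)^n\Vol(\alpha)$ at $t=0$ does give the identity.

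The divisor characterization, however, has a genuine gap. It sits in the direction $D\subset\mathrm{E}_{\mathrm{nK}}(\alpha)\Rightarrow\langle\alpha^{n-1}\rangle\cdot D=0$, which is the only direction the paper uses (for $K_{Y/X}$ and $E$ in the proof of Theorem \ref{thmB}; cf. Remark \ref{rmk:Final}). There are two problems.

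First, the restricted volume is misdefined. As you define it, $\Vol_{X|D}(\alpha)$ is a supremum of numbers $\beta^{n-1}\cdot\widetilde D=\int_{\widetilde D}\eta^{n-1}>0$, so it never vanishes, and your target equivalence is false for that definition. The Ein--Lazarsfeld--Musta\c{t}\u{a}--Nakamaye--Popa formula \cite{ELMNP09} only allows approximations with $\widetilde D\not\subset\mathrm{Supp}(E)$. Such approximations exist only when $D\not\subset\mathrm{E}_{\mathrm{nK}}(\alpha)$, because the push-forward of $\eta+[E]$ is a K\"ahler current in $\alpha$ that is smooth near generic points of $D$. Correspondingly, $\frac{d}{dt}\Vol(\alpha+tD)|_{t=0}=n\Vol_{X|D}(\alpha)$ is a theorem only for $D$ outside the non-K\"ahler locus. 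For $D\subset\mathrm{E}_{\mathrm{nK}}(\alpha)$, identifying the derivative with a restricted volume declared to be $0$ is the statement itself, so the reduction is circular exactly where it matters.

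Second, the mechanism you sketch does not work. \eqref{eqn:Null_Locus} concerns a fixed nef and big class, and the null locus of a K\"ahler $\beta_k$ is empty. Positivity of each $\beta_k^{n-1}\cdot\widetilde D_k$ says nothing about their limit. Proving that this limit is $0$ is the whole content, and your fallback just re-cites \cite{Tos26}.

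What is actually needed is a split into two cases.

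\emph{Case $\nu(V_\theta,D)>0$.} Then $D$ is a component of the negative part $N(\alpha)$ of Boucksom's divisorial Zariski decomposition $\alpha=Z(\alpha)+\{N(\alpha)\}$ \cite{Bou04}. Since $\langle\alpha^{n-1}\rangle=\langle Z(\alpha)^{n-1}\rangle$ and $\Vol(\alpha)=\Vol(Z(\alpha))$, applying the identity to $\alpha$ and to the big class $Z(\alpha)$ gives $\langle\alpha^{n-1}\rangle\cdot N(\alpha)=\Vol(\alpha)-\Vol(Z(\alpha))=0$. The movable class $\langle\alpha^{n-1}\rangle$ pairs nonnegatively with every prime divisor, so each component of $N(\alpha)$ pairs to $0$. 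This is the part that genuinely follows ``in particular''.

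\emph{Case $\nu(V_\theta,D)=0$ but $D\subset\mathrm{E}_{\mathrm{nK}}(\alpha)$.} Here orthogonality alone does not suffice. For nef $\alpha$ this case is \eqref{eqn:Null_Locus}, but not in general. In this case $D\not\subset\mathrm{E}_{\mathrm{nK}}(\alpha+\varepsilon\om)$ for every $\varepsilon>0$. You then need an extra input, such as a transcendental version of the theorem of \cite{ELMNP09} that $\Vol_{X|D}(\alpha+\varepsilon\om)\to 0$ when $D$ is a component of $\mathrm{E}_{\mathrm{nK}}(\alpha)$. This must be combined with the derivative formula for $\alpha+\varepsilon\om$ and the continuity of $\langle\cdot^{n-1}\rangle$ on the big cone.

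Your proposal supplies neither the first argument nor the second input.
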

Conjecture \ref{conj:Ort} was previously proved on projective manifolds: for integral classes, see \cite[Sec. 4]{BDPP13}, \cite[Thm. 4.15, Thm. B]{BFJ09}, \cite[Thm. C]{ELMNP09}; for general transcendental classes, see \cite[Thm. D]{WN19}. %Although there have been recent developments (see for instance \cite{Nys24,Vu23}), for general K\"ahler manifolds the \emph{Orthogonality Conjecture} is still open and it is connected to the duality of the psef and the movable cone, to the differentiability of the volume function on the big algebraic cone and to Demailly's holomorphic Morse inequalities. 

\subsection{Lelong numbers and Siu Decomposition}\label{ssec:LelongSiu}
We recall that the \emph{Lelong number} of a q-psh function $u$ at a point $x$ is defined as
\begin{equation}
    \label{eqn:Lelong}
    \nu(u,x):=\sup\left\{c\geq 0\, : \, u(z)\leq c\log \lVert z-x\rVert^2+O(1) \text{ near } x\right\}.
\end{equation}
The Lelong number of $u$ along a subvariety $V$ is given by
$
\nu(u,V)=\inf_{x\in V} \nu(u,x).
$

Given $T=\theta_u$ closed and positive current, $\nu(T,x):=\nu(u,x)$ is well-defined since \eqref{eqn:Lelong} is local and $\theta$ is smooth. By a deep result of Siu \cite{Siu74}, any closed and positive $(1,1)$-current $T$ admits a unique decomposition
\begin{equation}\label{eqn:Siu}
	T=R+\sum_F \nu(T,F)[F]
\end{equation}
where the sum is over all prime divisors $F$, $[F]$ is the current of integration along $F$, $R$ is a closed and positive current such that $\nu(R,F)=0$ for any $F$, $\nu(T,F_j)>0$ for at most a countable family of prime divisors $F_j$ and the series $\sum_j\nu(T,F_j)[F_j]$ weakly converges in the sense of currents; \eqref{eqn:Siu} is known as \emph{Siu Decomposition} of the current $T$ (cf. \cite[Ch. III, (8.16)]{DemNotes}).

Note that if $u$ is a q-psh function with analytic singularities of type $\left(\mathcal{I},c\right)$ then $\nu(u,F)=c\ord_F \mathcal{I}$ for any prime divisor $F$ on $X$ where the order of $\mathcal{I}$ along $F$ is defined as the infimum of the vanishing orders of $f$ varying $f\in\mathcal{I}$. In particular if $p:Y\to X$ is a log resolution for $\mathcal{I}$ then
\begin{equation}\label{eqn:Lelong_Anal_Sing}
    \nu(u\circ p,F)=c\ord_F D
\end{equation}
for any prime divisor $F$ on $p$ where $D$ is the effective $\Z$-divisor such that $\mathcal{O}_Y\cdot p^{-1}\mathcal{I}=\mathcal{O}_Y(-D)$. Thus if $T=\theta_u$ for $u\in\mathcal{A}(X,\theta)$ of type $\left(\mathcal{I},c\right)$ and $p:Y\to X$ is a log resolution of $\mathcal{I}$ then the Siu Decomposition of $p^*\theta_u$ is given by
$$
p^*\theta_u=S+c[D]
$$
for $S$ closed and positive $(1,1)$-current with \emph{bounded potentials} (i.e. locally $S\overset{loc}{=}\ddc v$ for $v$ bounded).

\subsection{Multiplier ideal sheaves}\label{ssec:Multiplier} Given a coherent ideal sheaf $\mathcal{I}\subset \mathcal{O}_X$, its associated \emph{multiplier ideal sheaf} $\mathcal{J}(\mathcal{I})$ is defined as
$$
\mathcal{J}(\mathcal{I})=p_*\mathcal{O}_Y\left(K_{Y/X}-E\right)
$$
where $p:Y\to X$ is any fixed log resolution of the ideal $\mathcal{I}$ and where $E$ is the effective divisor such that $\mathcal{O}_Y\cdot p^{-1}\mathcal{I}=\mathcal{O}_Y(-E)$. If $\varphi$ is a q-psh function with analytic singularities of type $\left(\mathcal{I},1\right)$ then $\mathcal{J}(\mathcal{I})=\mathcal{J}(\varphi)$ where $\mathcal{J}(\varphi)$ is the analytic sheaf of germs of local holomorphic functions $f$ such that $\lvert f\rvert^2e^{-\varphi}$ is locally integrable with respect to the Lebesgue measure (see \cite[Thm. 9.3.42]{Laz04II}). Through this analytic interpretation it is possible to check that multiplier ideal sheaves are integrally closed.
We also recall that for any modification $p:Y\to X$ and any prime divisor $F\subset Y$ we have $\ord_F p^{-1}\mathcal{J}(\varphi)\cdot \mathcal{O}_Y\leq \nu(\varphi\circ p,F)$ (see \cite[Lem. B.4]{BBJ15}). In particular if $\phi$ is a q-psh function with analytic singularities of type $(\mathcal{J}(\varphi),1)$ then $\varphi\preccurlyeq \phi$. Indeed, letting $\om$ be a K\"ahler form such that $\varphi,\phi\in \PSH(X,\om)$, this follows by comparing the potentials of the closed and positive currents $p^*\om_{\phi}-[D]$, $p^*\om_{\varphi}-[D]$ where $p:Y\to X$ is a log resolution of $\mathcal{J}(\varphi)$ such that $p^{-1}\mathcal{J}(\varphi)\cdot \mathcal{O}_Y=\mathcal{O}_Y(-D)$.

\subsection{CscK metrics}\label{ssec:CscK}
A \emph{constant scalar curvature K\"ahler metric} (cscK metric) in a cohomology class $\alpha$ is a K\"ahler metric associated to a K\"ahler form $\om\in \alpha$ such that the scalar curvature $S(\om)=\tr_{\om}\Ric(\om)=n\frac{\Ric(\om)\wedge\om}{\om^n}$ is constant, i.e. it is equal to $\overline{S}=\frac{\int_X S(\om)\om^{n}}{\int_X \om^{n}}=n\frac{c_1(X)\cdot\alpha^{n-1}}{\alpha^n}$.
The typical examples of cscK metrics are the so-called \emph{K\"ahler-Einstein metrics} in which the K\"ahler forms are proportional to their Ricci forms.

   \subsection{(Uniform) $K$-stability for transcendental classes}\label{ssec:K-stability} We are going to briefly describe the notion of (uniform) $K$-stability for transcendental K\"ahler classes (see \cite{Sjo18}, \cite{DR17}).

    Let $\alpha$ be a K\"ahler class on an $n$-dimensional compact K\"ahler manifold $X$. Using the notation in \cite[Def. 1.1.1]{MPWN25}, a \emph{test configuration}\footnote{test configurations in the transcendental setting were first considered in \cite[Def. 2.10]{DR17}, \cite[Def. 1.3]{Sjo18}} $\left(\cX,\mathcal{A}+\mathcal{D}\right)$ for $(X,\alpha)$ consists of
    \begin{itemize}
        \item[i)] a compact K\"ahler manifold\footnote{Similarly to \cite{MPWN25} it is enough to consider \emph{smooth dominating} test configurations when interested in $K$-stability.} $\cX$ together with a surjective morphism $\pi:\cX\to X\times \P^1$ such that $\cX\setminus \cX_0 \overset{\pi}{\simeq}\P^1\setminus\{0\}$ where $\cX_0:=\pi^{-1}(0)$ denotes the fibre over $0\in\P^1$;
        \item[ii)]  a lift of the standard $\C^*$-action on $X\times \P^1$ to $\cX$ making $\pi$ equivariant;
        \item[iii)] a class $\mathcal{A}+\mathcal{D}$ on $\cX$ where $\mathcal{A}:=\pi^*p_1^*\alpha$ while $\mathcal{D}$ is (the class of) a divisor supported on $\cX_0$.
    \end{itemize}
    A test configuration is said to be K\"ahler (resp. big) if the cohomology class $\mathcal{A}+\mathcal{D}$ is K\"ahler (resp. big). Moreover, it is said to be \emph{trivial} if $\cX=X\times \P^1$ with the trivial action; in such a case, $\cD$ is necessarily a multiple of the zero fiber. Following \cite{Der16, BHJ17}, the triviality is measured through the functional
    \begin{equation}
        \label{eqn:JInv}
        \jj^\NA(\cX,\cA+\cD):=\cA^n\cdot (\cA+\cD)- \frac{(\cA+\cD)^{n+1}}{n+1},
    \end{equation}
    which is non-negative for any K\"ahler test configurations and vanishes if and only if $(\cX,\cA+\cD)$ is trivial (\cite[Thm. 1.5]{Der18}, \cite[Lem. 5.3]{Sjo20}).
    Then, similarly to the algebraic setting, $K$-stability asks for the positivity of a certain weight attached to K\"ahler test configurations: the so-called \emph{Donaldson-Futaki invariant}
    $$
    \DF(\cX,\cA+\cD):=\overline{S}\frac{(\cA+\cD)^{n+1}}{n+1}+(\cA+\cD)^n\cdot K_{\cX/\P^1},
    $$
    where $\overline{S}$ is the average of the scalar curvature (cf. subsection \ref{ssec:CscK}).
    \begin{defi}
        Let $(X,\alpha)$ be an $n$-dimensional compact K\"ahler manifold. Then $(X,\alpha)$ is said to be
        \begin{itemize}
            \item[i)] \emph{$K$-stable} if
            $$
            \DF(\cX,\cA+\cD)\geq 0
            $$
            for any K\"ahler test configuration $(\cX,\cA+\cD)$ with equality if and only if $(\cX,\cA+\cD)$ is trivial;
            \item[ii)] \emph{uniformly $K$-stable} if there exists $\sigma>0$ such that
            $$
            \DF(\cX,\cA+\cD)\geq \sigma \jj^\NA(\cX,\cA+\cD)
            $$
            for any K\"ahler test configuration $(\cX,\cA+\cD)$.
        \end{itemize}
    \end{defi}
    Clearly, uniform $K$-stability implies $K$-stability, while, as stated in the Introduction, there is evidence that the reverse implication may not be valid in general. Finally, again as mentioned in the Introduction, we recall that if $(X,\alpha)$ is (uniformly) $K$-stable, then the identity component of the \emph{linear automorphism group} $\mathrm{LAut}(X)$ must be trivial (\cite[Cor. 4.17]{Sjo20}).

   \subsection{$K$-stability for models}
   In \cite{MP25, MPWN25} the notion of K-stability has been strengthened to ensure the existence of cscK metrics. Generalising \cite{Li22b, Li21} to the transcendental setting, the Donaldson-Futaki invariant and the Non-Archimedean $J$-functional \eqref{eqn:JInv} can be extended to big test configurations $(\cX,\cA+\cD)$ as
   \begin{gather*}
       \DF(\cX,\cA+\cD)=\overline{S}\frac{\langle (\cA+\cD)^{n+1})\rangle}{n+1}+ \langle (\cA+\cD)^n\rangle\cdot K_{\cX/\P^1},\\
       \jj^\NA(\cX,\cA+\cD)=\cA^n\cdot \langle \cA+\cD\rangle-\frac{\langle(\cA+\cD)^{n+1} \rangle}{n+1}.
   \end{gather*}
   Then, $(X,\alpha)$ is said to be \emph{uniformly $K$-stable for models} if there exists $\sigma>0$ such that
   $$
   \DF(\cX,\cA+\cD)\geq \sigma \jj^\NA(\cX,\cA+\cD)
   $$
   for any big test configuration $(\cX,\cA+\cD)$. Since by \cite[Cor. 5.5]{Sjo18}, \cite[Thm. 1.1]{DR17} the existence of a unique cscK metric implies uniform $K$-stability (and hence also the one for models), the following recent result gives a partial Yau-Tian-Donaldson correspondence.
   \begin{thm}[{\cite[Thm. A]{MPWN25}}]\label{thm:Pietro_David}
       If $(X,\alpha)$ is uniformly $K$-stable for models then there exists a unique cscK metric in $\alpha$.
   \end{thm}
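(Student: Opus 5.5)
The plan is to reduce to the variational criterion of Chen--Cheng: a unique cscK metric exists in $\alpha$ as soon as the Mabuchi functional $M$ is coercive modulo nothing, i.e. $M\geq \sigma' J - C$ on the space of Kähler potentials. This is the natural route because $\mathrm{LAut}(X)^0$ is trivial here, as it must be under uniform $K$-stability for models. I would argue by contradiction.

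\textbf{Step 1: a destabilizing geodesic ray.} Suppose coercivity fails. Following the now standard strategy of Berman--Boucksom--Jonsson, adapted to transcendental classes as in \cite{MP25}, I would take a sequence of potentials $u_j$ with $M(u_j)\leq \frac{1}{j}J(u_j)-j$ and $J(u_j)\to+\infty$. Joining a fixed reference potential to the $u_j$ by finite-energy geodesics, and using compactness of sublevel sets of the entropy in $\mathcal{E}^1$, I would extract a unit-speed psh geodesic ray $U$ in $\mathcal{E}^1(X,\theta)$. This ray should satisfy
\[
\lim_{t\to\infty} \frac{M(U_t)}{t}\leq 0 \quad\text{and}\quad \lim_{t\to\infty} \frac{J(U_t)}{t}>0 .
\]

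\textbf{Step 2: non-Archimedean transcription.} The ray $U$ defines a non-Archimedean potential $U^{\NA}\in\mathcal{E}^{1,\NA}$, namely the function on divisorial valuations given by the generic Lelong numbers of the $S^1$-invariant potential on $X\times\mathbb{D}^*$. I would then prove the slope inequalities
\[
\lim_t \frac{J(U_t)}{t}=J^{\NA}(U^{\NA}), \qquad \lim_t \frac{M(U_t)}{t}\geq M^{\NA}(U^{\NA}),
\]
where $M^{\NA}=H^{\NA}+\overline{S}E^{\NA}+(\text{Ricci energy})^{\NA}$. The energy parts follow from the continuity of the energies under the transcendental Monge--Ampère formalism. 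The entropy part, $\lim_t H(U_t)/t\geq H^{\NA}(U^{\NA})$, requires a lower semicontinuity argument via multiplier ideals (Demailly approximation of $U$ on the total space), exactly as in the algebraic case. Combining these, uniform $K$-stability over $\mathcal{E}^{1,\NA}$, meaning $M^{\NA}\geq\sigma J^{\NA}$, gives a contradiction. This is Piccione's theorem \cite{MP25}.

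\textbf{Step 3: from models to $\mathcal{E}^{1,\NA}$.} It remains to show that uniform $K$-stability for models, i.e. $\DF\geq\sigma \jj^{\NA}$ on big test configurations, implies $M^{\NA}\geq \sigma J^{\NA}$ on all of $\mathcal{E}^{1,\NA}$. First, I would identify $\DF$ and $\jj^{\NA}$ of a big test configuration $(\cX,\cA+\cD)$ with $M^{\NA}$ and $J^{\NA}$ of the associated potential. This uses the non-pluripolar intersection formalism of Proposition \ref{prop:NPP}, together with the orthogonality-type vanishing $\langle(\cA+\cD)^n\rangle\cdot F=0$ for $F$ in the non-Kähler locus, which handles the log-discrepancy term in $K_{\cX/\P^1}$. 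Second, I would approximate an arbitrary $\phi\in\mathcal{E}^{1,\NA}$ by model potentials, namely those coming from big test configurations obtained by blowing up multiplier ideals $\mathcal{J}(m\Phi)$ of the associated ray. The approximants should converge in strong topology and have $H^{\NA}$ converging, which parallels \cite{Li22b}.

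I expect the main obstacle to be this last entropy convergence along the multiplier-ideal approximation in the transcendental setting. The plan there is to bound $H^{\NA}(\phi_m)-H^{\NA}(\phi)$ using $\nu(\phi_m,F)-\nu(\phi,F)\leq A(F)/m$, a consequence of Demailly's approximation together with the multiplier-ideal inequality recalled in \S\ref{ssec:Multiplier}. I would then integrate this bound against the non-pluripolar Monge--Ampère measures, using monotonicity \eqref{eqn:Monotonicity} to pass to the limit.
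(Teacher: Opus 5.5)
Your architecture matches how the paper accounts for \cite[Thm.~A]{MPWN25}. It runs through Chen--Cheng coercivity and a destabilizing geodesic ray whose non-Archimedean limit contradicts uniform $K$-stability over $\mathcal{E}^{1,\NA}$; this is Piccione's theorem \cite{MP25}, and your Steps 1--2 outline it reasonably. It then uses the equivalence, proved in \cite{MPWN25} after \cite{Li22b}, between uniform $K$-stability over $\mathcal{E}^{1,\NA}$ and uniform $K$-stability for models. The gap is in Step~3, exactly at the point you flag. The convergence $H^{\NA}(\phi_m)\to H^{\NA}(\phi)$ along model approximants is the heart of that equivalence, and the mechanism you propose for it does not work.

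The multiplier-ideal estimate is a bound on \emph{potentials}. Its meaningful form is $0\le\nu(\phi,F)-\nu(\phi_m,F)\le A(F)/m$; the inequality you wrote is trivial, since the approximants are less singular. Integrated against Monge--Amp\`ere measures, such a bound controls the energy-type terms ($E^{\NA}$, $J^{\NA}$, the Ricci energy). But $H^{\NA}(\phi_m)=\int A_X\,\mathrm{MA}(\phi_m)$ depends on \emph{where} the measure sits, and $A_X$ is unbounded and only lower semicontinuous. Strong convergence therefore gives only $H^{\NA}(\phi)\le\liminf_m H^{\NA}(\phi_m)$. A pointwise bound on $\phi_m-\phi$ does not control how much mass $\mathrm{MA}(\phi_m)$ puts on valuations of large log discrepancy: mass of order $1/m$ where $A_X$ is of order $m$ is negligible for strong convergence but contributes order one to the entropy. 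The monotonicity \eqref{eqn:Monotonicity} only compares total masses in fixed classes, so it cannot give the inequality you actually need, $\limsup_m H^{\NA}(\phi_m)\le H^{\NA}(\phi)$.

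A sanity check confirms this. In the algebraic case, the Berman--Boucksom--Jonsson approximants $\frac{1}{m+m_0}\log|\mathcal{J}(m\phi)|$ satisfy the same kind of bound, $0\le\phi_m-\phi\le C(1+A_X)/m$, and they are genuine semi-ample test configurations. Your argument would apply to them verbatim and prove Boucksom--Jonsson's entropy regularization conjecture \cite{BJ18} outright. That conjecture is precisely what the Special Fujita Approximations of \cite{Tru26} and Theorem \ref{thmB} are designed to settle.

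What is missing is an entropy estimate that uses the specific geometry of the \emph{big} models. These are the normalized blow-ups of $\mathcal{J}(m\Phi)$ carrying big, non-nef classes, together with their positive intersection numbers against the log-discrepancy divisor. This is the genuinely new input of \cite{Li22b}, and of its K\"ahler counterpart in \cite{MPWN25}.

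A smaller point concerns your identification of $\DF$ with $M^{\NA}$. For a big test configuration, $\DF$ exceeds $M^{\NA}$ of the associated potential by the nonnegative term $\langle(\cA+\cD)^n\rangle\cdot(\cX_0-\cX_{0,\mathrm{red}})$, up to normalization. That is the wrong direction for deducing $M^{\NA}\ge\sigma J^{\NA}$ from $\DF\ge\sigma\jj^{\NA}$. The identification needs a base change, normalization and resolution. In that step, the vanishing of positive products against exceptional divisors is what makes the invariants stable under pullback.
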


\section{K\"ahler Fujita Approximations}\label{sec:4}
In the whole section $\alpha$ will be a big cohomology class on a compact K\"ahler manifold $(X,\omega)$, and $\theta\in\alpha$ will be a fixed smooth closed $(1,1)$-form.

\smallskip

We introduce the following definition of (Nef) Kähler Fujita Approximations.

\begin{defi}
    The data $(p_k:Y_k\to X, \beta_k,E_k)$ is said to be a \emph{K\"ahler Fujita Approximation of} $\alpha$ if $p_k:Y_k\to X$ are modifications from compact K\"ahler manifolds $Y_k$ such that
    \begin{itemize}
        \item[i)] $p_k^*\alpha=\beta_k+\{E_k\}$ for $E_k$ effective $\R$-divisors and $\beta_k$ K\"ahler classes;
        \item[ii)] $ \beta_k^n\longrightarrow \Vol(\alpha)$ as $k\to +\infty$.
    \end{itemize}
    If the $\beta_k$ are only nef then we say that $(p_k:Y_k\to X, \beta_k, E_k)$ is a \emph{Nef} Fujita Approximation of $\alpha$.
\end{defi}
In the Nef setting the top intersection product $\beta_k^n$ still captures the volume of the classes and there is no loss of generality in assuming that the morphisms $p_k$ are given by a composition of blow-ups along smooth centers.

Following the pioneering work of Fujita \cite{Fuj94} in the algebraic setting, it can be proved that K\"ahler Fujita Approximations always exist (see \cite[Thm. 1.4]{Bou02}).

\subsection{Pluripotential-theoretical perspective of Fujita Approximations.}

Let $p:Y\to X$ be a modification and let $D$ be an effective $\R$-divisor such that $\beta:=p^*\alpha-\{D\}$ is a psef cohomology class. Then, for any closed and positive current $S\in\beta$ there exists a unique closed and positive current $T\in \alpha$ such that $p^*T=S+[D]$ (see, for instance, \cite[Prop. 1.2.7]{BouThesis}). Following \cite[Def. 4.12]{DNTT24}, we can interpret Nef Fujita Approximations from a pluripotential-theoretic perspective as follows.

\begin{lem}\label{lem:ClassicFujita}
    Let $(p_k:Y_k\to X, \beta_k,E_k)$ be a (resp. Nef)  K\"ahler Fujita Approximation of $\alpha$. Then there exists $\{\varphi_k\}_k\subset \PSH(X,\theta)$ such that
    \begin{itemize}
        \item[i)] $p_k^*\theta_{\varphi_k}=\eta_k+[E_k]$ for $\eta_k\in \beta_k$ K\"ahler form (resp. $\eta_k$ closed and positive current with minimal singularities representing a nef class $\beta_k$);
        \item[ii)] $V_{\varphi_k}:=\int_X \langle \theta_{\varphi_k}^n \rangle=\beta_k^n$ for any $k\in\N$ and $V_{\varphi_k}\longrightarrow \int_X \langle \theta_{V_\theta}^n \rangle= \Vol(\alpha)$ as $k\to +\infty$. 
    \end{itemize}
    Conversely, if $\{\varphi_k\}_k\subset\PSH(X,\theta)$ is a sequence such that $(i)$ and $(ii)$ hold, then $(p_k:Y_k\to X, \beta_k,E_k)$ is a (resp. Nef) K\"ahler Fujita Approximation of $\alpha$.
\end{lem}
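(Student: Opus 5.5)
The plan is to pass back and forth between currents on $Y_k$ and currents on $X$, using the correspondence recalled just before the statement: for any closed positive $S\in\beta$ with $\beta=p^*\alpha-\{D\}$ psef, there is a unique closed positive $T\in\alpha$ with $p^*T=S+[D]$.

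\emph{Direct implication.} Given a (Nef) Kähler Fujita Approximation, we choose $\eta_k\in\beta_k$. In the Kähler case $\eta_k$ is a Kähler form. In the nef case $\eta_k$ is a current with minimal singularities in $\beta_k$, which exists since nef classes are psef. Let $T_k=\theta_{\varphi_k}$ be the unique current with $p_k^*T_k=\eta_k+[E_k]$; this gives (i).

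For (ii), Proposition \ref{prop:NPP}(iv) gives
$$\int_X\langle\theta_{\varphi_k}^n\rangle=\int_{Y_k}\langle (\eta_k+[E_k])^n\rangle .$$
The divisor current $[E_k]$ is supported on a pluripolar set, and locally $\eta_k+[E_k]=dd^c(u+\log|s|^2)$ with $u$ a potential of $\eta_k$. By the plurifine locality of the non-pluripolar product, the right-hand side equals $\int_{Y_k}\langle\eta_k^n\rangle$. If $\eta_k$ is Kähler, this is $\beta_k^n$. If $\eta_k$ has minimal singularities in a nef class, it equals $\Vol(\beta_k)=\beta_k^n$ by \S\ref{ssec:Volume} and \cite[Prop. 4.3]{Bou02}. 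Together with the hypothesis $\beta_k^n\to\Vol(\alpha)$, this yields (ii).

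\emph{Converse.} Given $\varphi_k$ satisfying (i), we have $\beta_k=\{\eta_k\}=p_k^*\alpha-\{E_k\}$ with $\eta_k$ Kähler (respectively a current in the nef class $\beta_k$). The computation above reverses to give $\beta_k^n=V_{\varphi_k}\to\Vol(\alpha)$. So the data form a (Nef) Kähler Fujita Approximation.

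\emph{Main obstacle.} The delicate step is the identity $\langle(\eta_k+[E_k])^n\rangle=\langle\eta_k^n\rangle$ on $Y_k$. I would justify it through the local definition of the non-pluripolar product on the plurifine open sets $\{u+\log|s|^2>-j\}$. On these sets the $\log|s|^2$ term is pluriharmonic away from $\mathrm{Supp}(E_k)$, and the support has no mass. This is the standard fact \cite[Prop. 1.4]{BEGZ10} that adding a divisor does not change the non-pluripolar product.

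In the nef case one also needs that currents with minimal singularities in a nef class have full mass $\beta_k^n$. This follows from $\Vol(\beta)=\beta^n$ for nef $\beta$ together with the definition of volume as the mass of $\langle\theta_{V_\theta}^n\rangle$ transported to the class $\beta_k$ on $Y_k$.
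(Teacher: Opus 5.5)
Your proposal is correct and follows essentially the same route as the paper. You obtain $\varphi_k$ from the unique current $T\in\alpha$ with $p_k^*T=\eta_k+[E_k]$, then compute $V_{\varphi_k}$ via Proposition \ref{prop:NPP}(iv), using that $[E_k]$ contributes nothing to the non-pluripolar product and, in the nef case, that a current with minimal singularities has mass $\Vol(\beta_k)=\beta_k^n$. The only difference is that you spell out the identity $\langle(\eta_k+[E_k])^n\rangle=\langle\eta_k^n\rangle$ (by locality off $\mathrm{Supp}(E_k)$ and the absence of mass on pluripolar sets) and the converse, both of which the paper leaves implicit.
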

The choice of the reference form $\theta$ is clearly not significant. Given a different form $ \theta'\in \alpha$ one can replace $\varphi_k\in\PSH(X,\theta)$ with $\varphi_k':=\varphi_k+f\in \PSH(X,\theta')$ for $f\in C^\infty(X)$ such that $\theta=\theta'+\ddc f$.
\begin{proof}
    Let us first prove the case of K\"ahler Fujita Approximation. Letting $\eta_k\in\beta_k$ be a K\"ahler form, $\eta_k+[E_k]$ is a closed and positive current in $p^*_k \alpha$. Thus, by the preceding discussion, there exists a unique $S=\theta_{\varphi_k}$ such that $p_k^*\theta_{\varphi_k}=\eta_k+[E_k]$ and $(i)$ follows. Then Proposition \ref{prop:NPP} gives
    $$
    \beta_k^n=\int_{Y_k} \eta_k^n=\int_{Y_k}\langle p_k^*\theta_{\varphi_k}^n\rangle=\int_X \langle\theta_{\varphi_k}^n \rangle=V_{\varphi_k},
    $$
    concluding the case of K\"ahler Fujita Approximations. In the Nef case the proof proceeds in the same way since $\beta_k^n=\int_{Y_k}\eta_k^n$ if $\eta_k$ is a closed and positive current with minimal singularities by the discussion in subsection \ref{ssec:Volume}. 
\end{proof}

\subsection{Special K\"ahler/Nef Fujita Approximations}
By an abuse of nomenclature, given a big class $\alpha$, we define its first Riemann-Roch coefficient as
$$
\tau_1(X,\alpha)=\langle \alpha^{n-1}\rangle \cdot  K_X,
$$
extending \cite[Definition 4.1]{Li21}. 
We can now introduce the notion of \emph{Special} Fujita Approximations, following \cite{Li21, Tru26}.
\begin{defi}
    A K\"ahler/Nef Fujita Approximation $(p_k:Y_k\to X, \beta_k,E_k)$ of $\alpha$ is said to be \emph{Special} if
    $$
    \tau_1\left(Y_k,\beta_k\right)\longrightarrow \tau_1(X,\alpha)
    $$
    as $k\to +\infty$.
\end{defi}
Since for any nef class $\beta$ we have $\tau_1(Y, \beta)=\beta^{n-1}\cdot K_Y$, by Projection Formula $\tau_1(Z,q^*\beta)= \tau_1(Y,\beta)$ if $q:Z\to Y$ is a birational morphism between compact K\"ahler manifolds and $\beta$ is Nef. Thus, without loss of generality the morphisms $p_k:Y_k\to X$ of any Special Nef Fujita Approximation $(p_k:Y_k\to X, \beta_k,E_k)$ of $\alpha$ can be assumed to be given as compositions of blowups along smooth centers.

\smallskip

The following result has been proved in the algebraic setting in \cite[Lem. 4.3]{Tru26} (following \cite[Lemma 4.10]{Li21}). We provide here a direct proof that does not use the transcendental holomorphic Morse inequalities (and the equivalent differentiability of the volume) recently established in \cite{Tos26}.
\begin{lem}\label{lem:Special_H}
    A Nef Fujita Approximation $(p_k:Y_k\to X, \beta_k,E_k)$ of $\alpha$ is Special if and only if \begin{equation}
        \label{eqn:H_phi}
        \beta_k^{n-1}\cdot K_{Y_k/X}\longrightarrow 0    \end{equation}
    as $k\to +\infty$.
\end{lem}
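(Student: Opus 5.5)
The plan is to split off the pullback part of $K_{Y_k}$ and prove that it always converges to the right limit. Since $K_{Y_k}=p_k^*K_X+K_{Y_k/X}$ and $\beta_k$ is nef, we have
$$
\tau_1(Y_k,\beta_k)=\beta_k^{n-1}\cdot p_k^*K_X+\beta_k^{n-1}\cdot K_{Y_k/X}.
$$
The lemma therefore follows once we show that
$$
\beta_k^{n-1}\cdot p_k^*K_X\longrightarrow \langle\alpha^{n-1}\rangle\cdot K_X
$$
holds for \emph{every} Nef Fujita Approximation. By the projection formula, the left-hand side equals $p_{k*}(\beta_k^{n-1})\cdot K_X$. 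It thus suffices to prove the convergence of classes $p_{k*}(\beta_k^{n-1})\to\langle\alpha^{n-1}\rangle$ in $\mathrm{H}^{n-1,n-1}(X,\R)$, and this is the core of the argument.

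\emph{Step 1 (identifying the classes).} Take $\varphi_k$ as in Lemma \ref{lem:ClassicFujita}, so that $p_k^*\theta_{\varphi_k}=\eta_k+[E_k]$ with $\eta_k$ of minimal singularities in the nef class $\beta_k$. The non-pluripolar product ignores $[E_k]$. By Proposition \ref{prop:NPP} (iv) and the nefness of $\beta_k$ (minimal singularities carry full mass in every mixed product), the class of $\langle\theta_{\varphi_k}^{n-1}\rangle$ equals $p_{k*}\{\langle\eta_k^{n-1}\rangle\}=p_{k*}(\beta_k^{n-1})$. The class $\langle\alpha^{n-1}\rangle$ is represented by $\langle\theta_{V_\theta}^{n-1}\rangle$.

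\emph{Step 2 (positivity of the difference).} Set $\Delta_k:=\langle\alpha^{n-1}\rangle-\{\langle\theta_{\varphi_k}^{n-1}\rangle\}$. Since $\varphi_k\preccurlyeq V_\theta$, the monotonicity \eqref{eqn:Monotonicity}, applied one factor at a time, gives $\Delta_k\cdot\{S\}\ge 0$ for every closed positive $(1,1)$-current $S$. Every $(1,1)$-class $\gamma$ can be written as $\gamma=\om_1-\om_2$ with $\om_1,\om_2$ Kähler. Hence $|\Delta_k\cdot\gamma|\le\Delta_k\cdot\om_1+\Delta_k\cdot\om_2$, and it is enough to show $\Delta_k\cdot\{\om\}\to 0$ for a fixed Kähler form $\om$.

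\emph{Step 3 (main obstacle: controlling $\Delta_k\cdot\om$).} The only information available is the convergence of the top masses, $V_{\varphi_k}\to\Vol(\alpha)$. I would first test against $\alpha$ itself. Monotonicity gives
$$
0\le \int_X\langle\theta_{V_\theta}^n\rangle-\int_X\langle\theta_{\varphi_k}^{n-1}\wedge\theta_{V_\theta}\rangle\le \Vol(\alpha)-V_{\varphi_k}\to 0,
$$
and the same argument shows that all mixed masses $\int_X\langle\theta_{\varphi_k}^j\wedge\theta_{V_\theta}^{n-j}\rangle$ converge to $\Vol(\alpha)$. To pass from $\alpha$ to $\om$, I would fix a Kähler current $\theta_\psi\ge\varepsilon\om$ in $\alpha$ and consider the modified potentials $\psi_k:=\max(\varphi_k,\psi-C)$. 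These are $\theta$-psh, satisfy $\varphi_k\preccurlyeq\psi_k\preccurlyeq V_\theta$ and $\psi_k\succcurlyeq\psi$, so each $\theta_{\psi_k}$ dominates a Kähler current. Writing $\theta_{\psi}-\varepsilon\om\ge0$ and using multilinearity together with monotonicity, I want an estimate of the form
$$
\varepsilon\,\Delta_k\cdot\om\le \Big(\langle\alpha^{n-1}\rangle-\{\langle\theta_{\varphi_k}^{n-1}\rangle\}\Big)\cdot\alpha\ \ \text{(up to terms controlled by } \Vol(\alpha)-V_{\varphi_k}\text{)}.
$$
By the display above, the right-hand side tends to $0$. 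If this direct comparison fails, the fallback is the convexity/log-concavity of mixed non-pluripolar masses (à la Darvas--Di Nezza--Lu): convergence of the full masses forces convergence of all mixed masses against any fixed Kähler current in $\alpha$, and $\om\le\varepsilon^{-1}\theta_\psi$ then yields $\Delta_k\cdot\om\to0$.

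Combining Steps 1--3 gives $p_{k*}(\beta_k^{n-1})\to\langle\alpha^{n-1}\rangle$. Consequently $\tau_1(Y_k,\beta_k)-\tau_1(X,\alpha)$ differs from $\beta_k^{n-1}\cdot K_{Y_k/X}$ by a quantity tending to zero, which proves both directions of the equivalence at once.
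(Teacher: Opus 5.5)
Your outline up to Step 3 is sound and matches the paper's proof. The matching parts are: the splitting $\tau_1(Y_k,\beta_k)=\beta_k^{n-1}\cdot p_k^*K_X+\beta_k^{n-1}\cdot K_{Y_k/X}$; the identification of $\beta_k^{n-1}\cdot p_k^*K_X$ with $\int_X\langle\theta_{\varphi_k}^{n-1}\rangle\wedge\sigma$ for a smooth $\sigma\in K_X$ (the paper quotes \cite[Lem. 4.10]{DNTT24} for your Step 1); the reduction to one K\"ahler form; and the convergence $\int_X\langle\theta_{\varphi_k}^j\wedge\theta_{V_\theta}^{n-j}\rangle\to\Vol(\alpha)$, which indeed follows from monotonicity alone. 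One correction in Step 2: $\Delta_k\cdot\{S\}\ge 0$ is justified only for smooth semipositive $S$. Monotonicity controls $\int_X\langle\theta_{\varphi_k}^{n-1}\wedge S\rangle$, which equals the cohomological pairing only when $S$ is smooth. For $S=[D]$ with $D\subset\mathrm{E}_{\mathrm{nK}}(\alpha)$ a prime divisor and $\beta_k$ K\"ahler, Theorem \ref{conj:Ort} gives $\Delta_k\cdot\{D\}=-\beta_k^{n-1}\cdot p_k^*D<0$. You only use K\"ahler forms, so this is harmless. The real gap is Step 3, which carries the entire content of the lemma. There the convergence $\int_X\langle\theta_{\varphi_k}^{n-1}\rangle\wedge\om\to\int_X\langle\theta_{V_\theta}^{n-1}\rangle\wedge\om$ is stated as a goal but not proved.

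Neither of your two routes closes it.

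\emph{First route.} With a genuine K\"ahler current $\theta_\psi\ge\varepsilon\om$, multilinearity and monotonicity applied to $\theta_\psi-\varepsilon\om\ge 0$ do give
\[
\varepsilon\,\Delta_k\cdot\{\om\}\le\int_X\langle\theta_{V_\theta}^{n-1}\wedge\theta_\psi\rangle-\int_X\langle\theta_{\varphi_k}^{n-1}\wedge\theta_\psi\rangle .
\]
The right-hand side would be at most $\Vol(\alpha)-V_{\varphi_k}$ if $\varphi_k\preccurlyeq\psi$. But a fixed K\"ahler current is in general not less singular than $\varphi_k$. Your $\psi_k=\max(\varphi_k,\psi-C)$ does not fix this: it is only $\theta$-psh, and on the plurifine open set $\{\varphi_k>\psi-C\}$ one has $\theta_{\psi_k}=\theta_{\varphi_k}$. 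So there is no bound $\theta_{\psi_k}\ge\varepsilon\om$; being less singular than a K\"ahler current does not make $\theta_{\psi_k}$ one.

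\emph{Fallback.} Log-concavity only gives $\int_X\langle\theta_{\varphi_k}^{n-1}\wedge\theta_\psi\rangle\ge V_{\varphi_k}^{(n-1)/n}\big(\int_X\langle\theta_\psi^n\rangle\big)^{1/n}$. Its limit is in general strictly below $\int_X\langle\theta_{V_\theta}^{n-1}\wedge\theta_\psi\rangle$, because K\"ahler currents in $\alpha$ need not have full mass. For example, let $\pi:X\to\P^2$ be the blow-up at a point, $\alpha=\pi^*H$, and $\theta_\psi=\eta+c[E]$ with $\eta\in\pi^*H-cE$ K\"ahler and $0<c<1$. The target is $\pi^*H\cdot(\pi^*H-cE)=1$, while the bound is $\sqrt{1-c^2}$.

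\emph{What is needed.} The missing ingredient is the Darvas--Di Nezza--Lu theory of the distance $d_S$ on singularity types. Since $\varphi_k\preccurlyeq V_\theta$ and the mixed masses with $\theta_{V_\theta}$ converge, the singularity types $d_S$-converge \cite[Lem. 3.6]{DDNL19}. Then \cite[Thm. 1.4]{DDNL19} together with \cite[Thm. 1.2]{DDNL17b} yields convergence of $\int_X\langle\theta_{\varphi_k}^{p}\wedge T_1\wedge\cdots\wedge T_{n-p}\rangle$ for arbitrary closed positive $T_j$, in particular for $T_1=\om$. This is Lemma \ref{lem:ConvFuj}, which the paper invokes exactly at this point, after passing to solutions $u_k$ of Monge--Amp\`ere equations with prescribed singularity type $[\varphi_k]$. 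With it your Step 3 is one line, but it does not follow from log-concavity.
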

\begin{proof}
    As $\beta_k$ are nef, $\beta_k^{n-1}=\langle \beta_k^{n-1}\rangle$. Thus
    $$
    \beta_k^{n-1}\cdot K_{Y_k/X}= \tau_1(Y_k,\beta_k)-\beta_k^{n-1}\cdot p_k^*K_X,
    $$
    and to conclude the proof it suffices to show that
    $
    \beta_k^{n-1}\cdot p_k^*K_X\rightarrow \tau_1(X,\alpha)
    $
    as $k\to +\infty$. Letting $\sigma$ be a smooth closed $(1,1)$-form representing $K_X$ and letting $S_k$ be closed and positive $(1,1)$-currents with minimal singularities representing $\beta_k$, from \cite[Lem. 4.10]{DNTT24} we have
    $$
    \beta_k^{n-1}\cdot p_k^*K_X=\int_{Y_k}\langle S_k\wedge p_k^*\sigma\rangle.
    $$
    Thus Proposition \ref{prop:NPP} and Lemma \ref{lem:ClassicFujita} give
    $$
    \beta_k^{n-1}\cdot p_k^*K_X=\int_{Y_k}\langle S_k\wedge p_k^*\sigma\rangle=\int_{Y_k}\langle p_k^*\theta_{\varphi_k}^{n-1}\wedge p_k^*\sigma \rangle=\int_X \langle \theta_{\varphi_k}^{n-1}\wedge \sigma \rangle.
    $$
    where $\varphi_k\in\PSH(X,\theta)$ is such that $\p_k^*\theta_{\varphi_k}=\eta_k+[E_k]$.    
    Finally, applying Lemma \ref{lem:ConvFuj} below, we obtain
    $$
    \int_X\langle \theta_{\varphi_k}^{n-1}\wedge \sigma\rangle \longrightarrow \int_X\langle \theta_{V_\theta}^{n-1}\wedge \sigma \rangle=\langle \alpha^{n-1} \rangle\cdot K_X=\tau_1(X,\alpha),
    $$
    concluding the proof.
\end{proof}

    \begin{lem}\label{lem:ConvFuj}
    Let $\{\varphi_k\}_k\subset \PSH(X,\theta)$ be a sequence associated to a Nef Fujita Approximation as in Lemma \ref{lem:ClassicFujita}. Then
    \begin{equation}
        \label{eqn:To_Prove}
        \int_X \langle \theta_{\varphi_k}^p\wedge T_1\wedge \cdots \wedge T_{n-p}\rangle \longrightarrow \int_X \langle \theta_{V_\theta}^p\wedge T_1\wedge \cdots \wedge T_{n-p} \rangle
    \end{equation}
    as $k\to +\infty$ for any $p=0,\dots,n$ and for any closed and positive currents $T_1,\dots,T_{n-p}$.
\end{lem}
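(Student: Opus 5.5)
The plan is to use monotonicity of non-pluripolar masses together with the fact that the total mass $V_{\varphi_k}$ already converges to $\Vol(\alpha)$, and then to pass from the top-degree masses to mixed masses. Since $V_\theta$ is the least singular element of $\PSH(X,\theta)$, each $\varphi_k\preccurlyeq V_\theta$. The monotonicity \eqref{eqn:Monotonicity}, applied one slot at a time and using multilinearity and symmetry from Proposition \ref{prop:NPP}, gives
$$
\int_X\langle\theta_{\varphi_k}^p\wedge T_1\wedge\cdots\wedge T_{n-p}\rangle\leq \int_X\langle \theta_{V_\theta}^p\wedge T_1\wedge\cdots\wedge T_{n-p}\rangle .
$$
So it suffices to prove the reverse inequality asymptotically: the $\liminf$ of the left side is at least the right side.

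\textbf{Step 1: reduce to a combination of Kähler-type forms.} Write each $T_j$ as a combination of currents in the form $T_j=\theta^j_{u_j}$. It is convenient to dominate: choose $A>0$ such that $T_j\leq$ something is not available in general. I would instead work with the "deficit" functional
$$
D_k(T_1,\dots,T_{n-p}):=\int_X\langle \theta_{V_\theta}^p\wedge T_1\wedge\cdots\rangle-\int_X\langle\theta_{\varphi_k}^p\wedge T_1\wedge\cdots\rangle\geq 0 .
$$
This functional is multilinear and monotone in each $T_j$ with respect to singularity type. Since non-pluripolar masses only increase when the potentials become less singular, I would replace each $T_j$ by a current with minimal singularities in its class, possibly after adding $C\omega$. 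I expect this replacement to only make things worse in the right direction, but this needs checking. To handle it cleanly I would first treat the case where every $T_j$ is a smooth semipositive form, or $C\omega$ with $C$ large enough that $\theta+C\omega$ is Kähler.

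\textbf{Step 2: the key mixed-mass estimate via $\theta+\varphi$ convexity.} For a nef or Kähler auxiliary class, I would use the log-concavity (Khovanskii–Teissier type) inequalities for non-pluripolar products with fixed singularity types. An alternative is a direct polarization trick. Apply the top-degree convergence $V_{\varphi_k}\to\Vol(\alpha)$ to the class $\alpha+t\{\omega\}$, with potentials $\varphi_k$ viewed in $\PSH(X,\theta+t\omega)$. The mass $\int_X\langle(\theta+t\omega)_{\varphi_k}^n\rangle$ is a polynomial in $t$ whose coefficients are the mixed masses $\int\langle\theta_{\varphi_k}^{p}\wedge\omega^{n-p}\rangle$, and each coefficient is bounded above by the corresponding coefficient for $V_\theta$. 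Write $M_p:=\int_X\langle\theta_{V_\theta}^{p}\wedge\omega^{n-p}\rangle$, and let $m_{k,p}$ be the same mass with $V_\theta$ replaced by $\varphi_k$. Log-concavity of non-pluripolar mixed masses for fixed singularity type (e.g.\ \cite{DDNL17b}) gives $m_{k,p}\geq m_{k,n}^{p/n}\,(\omega^n)^{(n-p)/n}$, but this is not sharp. Instead I would use the nef model. By Lemma \ref{lem:ClassicFujita}, $m_{k,p}=\beta_k^{p}\cdot p_k^*\{\omega\}^{n-p}$ is a genuine intersection of nef classes on $Y_k$. Teissier-type inequalities then give
$$
M_p-m_{k,p}\leq C\,(\Vol(\alpha)-\beta_k^n)^{c}
$$
for uniform constants $C,c>0$. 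This is analogous to the \cite{BFJ09}-style estimate $\langle\alpha^{n-1}\rangle-\beta^{n-1}$ controlled by $\Vol(\alpha)-\beta^n$, obtained from the Diskant inequality.

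\textbf{Step 3: general $T_j$.} Decompose each $T_j$ via Siu and approximate it by forms dominated by $C\omega$. The difference $\int\langle\theta_{V_\theta}^p\wedge\cdots\rangle-\int\langle\theta_{\varphi_k}^p\wedge\cdots\rangle$ is controlled by replacing $T_j$ with $C\omega$ using monotonicity in the $\theta$-slots. Concretely, $\theta_{V_\theta}^p-\theta_{\varphi_k}^p$ tested against positive forms has "size" controlled by testing against $\omega^{n-p}$, using the comparison $\langle\theta_{\varphi_k}^p\rangle\leq\langle\theta_{V_\theta}^p\rangle$ in cohomological sense.

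The main obstacle is Step 2: obtaining a quantitative (or merely qualitative) bound for the mixed masses from convergence of the top mass alone. This is where I would rely on the Diskant/Teissier inequality on the models $Y_k$.
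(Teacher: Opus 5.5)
Your upper bound via monotonicity is correct and matches the paper, but the lower bound has two genuine gaps.

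\textbf{The key estimate in Step 2 is only asserted.} You yourself identify Step 2 as the crux, and it is not proved. The bound $M_p-m_{k,p}\le C(\Vol(\alpha)-\beta_k^n)^{c}$ is justified only by analogy with the Diskant inequality of \cite{BFJ09}. That is a projective result, and in that approach it is precisely the input for orthogonality. It is not among the tools available here for transcendental classes on compact K\"ahler manifolds. Even granting it, you do not show how it yields $m_{k,p}\to M_p$ for every $p<n$, since Diskant controls pairings of $\langle\alpha^{n-1}\rangle$ with $\beta$, not $\beta_k^p\cdot p_k^*\{\om\}^{n-p}$. The identification $m_{k,p}=\beta_k^p\cdot p_k^*\{\om\}^{n-p}$ is fine. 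The polarization in $t$, however, is circular: convergence of $\int_X\langle(\theta+t\om)_{\varphi_k}^n\rangle$ to $\int_X\langle(\theta+t\om)_{V_\theta}^n\rangle$ is equivalent to what you want, not implied by $V_{\varphi_k}\to\Vol(\alpha)$.

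The missing idea is this: log-concavity is not sharp against $\om$, but it is sharp against $\theta_{V_\theta}$ itself. By \cite[Thm. B]{DDNL18b},
$$\int_X\langle\theta_{\varphi_k}^j\wedge\theta_{V_\theta}^{n-j}\rangle\geq V_{\varphi_k}^{j/n}\Vol(\alpha)^{(n-j)/n}\to\Vol(\alpha).$$
Together with monotonicity, all these same-class mixed masses converge to $\Vol(\alpha)$. By \cite[Lem. 3.6]{DDNL19}, this is exactly $d_S$-convergence of $[\varphi_k]$ to $[V_\theta]$.

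\textbf{The reduction from arbitrary $T_j$ to smooth forms (Steps 1 and 3) fails.} Non-pluripolar masses are neither cohomological nor continuous under regularization of the $T_j$. For $T_1=[D]$ every such mass vanishes, while smooth approximants of $[D]$ give positive masses, so the masses against $T_j$ cannot be recovered from those against smooth approximants. The deficit $D_k$ also has no reason to be monotone in $T_j$: when $T_j$ becomes less singular, both terms increase. Finally, a cohomological comparison of $\{\langle\theta_{\varphi_k}^p\rangle\}$ with $\{\langle\theta_{V_\theta}^p\rangle\}$ cannot control $\int_X\langle\theta_{\varphi_k}^p\wedge T_1\wedge\cdots\wedge T_{n-p}\rangle$, since this integral is not determined by the classes involved.

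What handles arbitrary fixed $T_j$ is a convergence-in-capacity argument, as in the paper:
\begin{itemize}
\item[a)] Replace $\varphi_k$ by the normalized solutions $u_k$ of the Monge--Amp\`ere equation with prescribed singularity type $[\varphi_k]$ and density $\frac{V_{\varphi_k}}{\{\om\}^n}\om^n$. These exist by \cite[Thm. A]{DDNL18b}, since $\varphi_k$ has relatively minimal singularities. Let $u$ be the analogous solution for $[V_\theta]$.
\item[b)] The $d_S$-convergence then gives $u_k\to u$ in capacity, by \cite[Thm. 1.4]{DDNL19}.
\item[c)] Lower semicontinuity of non-pluripolar products under convergence in capacity, \cite[Thm. 1.2]{DDNL17b}, holds with any fixed closed positive $T_j$. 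It yields $\liminf_k\int_X\langle\theta_{u_k}^p\wedge T_1\wedge\cdots\wedge T_{n-p}\rangle\geq\int_X\langle\theta_u^p\wedge T_1\wedge\cdots\wedge T_{n-p}\rangle$.
\item[d)] Since $u_k$ and $\varphi_k$, and likewise $u$ and $V_\theta$, have the same singularity types, they have the same masses. This is the lower bound you are missing.
\end{itemize}
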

\begin{proof}
    This result is essentially well-known to experts; for completeness, we provide a proof.
    To simplify the notation, $\langle T\wedge S \rangle:=\langle T\wedge T_1\wedge \cdots \wedge T_{n-p}\rangle$ for any closed and positive $(p,p)$-current $T$.
    Thanks to \cite[Thm. A, Thm. B]{BEGZ10} and \cite[Thm. 1.4]{DDNL17b} we can pick $u$ as the unique solution to the complex Monge-Ampère equation
    $$
    \begin{cases}
        \langle \theta_u^n\rangle=\frac{\Vol(\alpha)}{\{\om\}^n} \,\om^n\,\\
        u\in \PSH(X,\theta), \sup_X u=0\\
        u-V_\theta\in L^\infty
    \end{cases},
    $$
    and let $u_k$ be the unique solution to the complex Monge-Ampère equation \emph{with prescribed singularities}
    \begin{equation}
        \label{eqn:MA_Prescr}
        \begin{cases}
            \langle \theta_{u_k}^n\rangle=\frac{V_{\varphi_k}}{\{\om\}^n} \,\om^n\,\\
            u_k\in \PSH(X,\theta), \sup_X u_k=0\\
            u_k-\varphi_k\in L^\infty
        \end{cases}.
    \end{equation}
    Note that the existence of the solution to \eqref{eqn:MA_Prescr} is given by \cite[Thm. A]{DDNL18b} since $\varphi_k$ has \emph{relatively} minimal singularities (see for instance \cite[Rem. 4.4]{DNTT24}).
    By the monotonicity of the non-pluripolar product \eqref{eqn:Monotonicity} we have
    $$
    \int_X \langle \theta_{u_k}^p\wedge S \rangle=\int_X \langle \theta_{\varphi_k}^p\wedge S \rangle\leq \int_X \langle \theta_{V_\theta}^p\wedge S \rangle=\int_X \langle \theta_u^p\wedge S \rangle.
    $$
    Thus, combining \cite[Thm. 1.2]{DDNL17b} with \cite[Thm. 1.4]{DDNL19}, the desired convergence \eqref{eqn:To_Prove} would follow if the \emph{singularity types} $[u_k]$ $d_S$\emph{-converged} to that of $[V_\theta]$ in the sense of \cite{DDNL19}. In our setting, this $d_S$-convergence is equivalent to
    \begin{equation}\label{eqn:Last_Conv}
        \int_X \langle \theta_{\varphi_k}^j\wedge \theta_{V_\theta}^{n-j} \rangle\longrightarrow \Vol(\alpha)
    \end{equation}
    for any $j=0,\dots,n$ by \cite[Lem. 3.6]{DDNL19}. Finally, \eqref{eqn:Last_Conv} is a consequence of \cite[Thm. B]{DDNL18b} since the total mass $V_{\varphi_k}:=\int_X \langle \theta_{\varphi_k}^n\rangle$ converges to $\Vol(\alpha)= \int_X \langle \theta_{V_\theta}^n\rangle$.
\end{proof}

\section{Existence of Special K\"ahler Fujita Approximations}\label{sec:5}
In this section we will prove Theorem \ref{thmB}, noting also that the considered morphisms $p_k:Y_k\to X$ in Theorem \ref{thmB} can be taken to be compositions of blow-ups along smooth centres such that the exceptional loci are simple normal crossing.

Let $\alpha$ be a big class on a compact K\"ahler manifold $X$ and let $\theta\in \alpha$ be a fixed smooth closed $(1,1)$-form. The following control on the discrepancies is the key ingredient.

\begin{prop}[{\cite[Prop. 3.1]{Tru26}}]\label{thm:Discrepancies_Bouck}
    Let $\mathcal{I}\subset \mathcal{O}_X$ be a coherent ideal sheaf, denote by $\mathcal{J}(\mathcal{I})\subset \mathcal{O}_X$ its multiplier ideal sheaf and let $p:Y\to X$ be the normalized blow-up along $\mathcal{I}$. Then
    \begin{equation}
        \label{eqn:Bouck}
        \ord_F K_{Y/X}\leq n\left(\ord_F \mathcal{I} - \ord_F \mathcal{J}(\mathcal{I})\right)
    \end{equation}
    for any prime divisor $F\subset Y$.
\end{prop}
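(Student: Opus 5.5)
The plan is to reduce \eqref{eqn:Bouck} to a local computation along a general point of $F$. Let $\mathcal{I}\cdot\mathcal{O}_Y=\mathcal{O}_Y(-D)$, with $D$ an effective Cartier divisor since $p$ is the normalized blow-up, so that $\ord_F\mathcal{I}=\ord_F D$. If $F$ is not $p$-exceptional, then $\ord_F K_{Y/X}=0$ and the right-hand side is nonnegative because $\mathcal{I}\subset\overline{\mathcal{I}}\subset\mathcal{J}(\mathcal{I})$. Thus I may assume $F$ is exceptional with center $Z=p(F)$ of codimension $c\geq 2$. Set $a:=\ord_F K_{Y/X}$ and $b:=\ord_F\mathcal{I}$. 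Both sides only see a neighbourhood of the generic point of $F$, so I will work analytically near a general point $x\in Z$, where $Z$ is smooth. I will also pass to a log resolution $\mu:W\to Y$ of $\mathcal{I}$ factoring through $Y$, which exists by Hironaka in the analytic category. The valuation $\ord_F$ then corresponds to the strict transform $F'$ of $F$ on $W$, with $\ord_{F'}K_{W/X}=a$.

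The first key step is an upper bound on the order of the multiplier ideal. Let $\varphi=\log\sum_j|f_j|^2$ for local generators $f_j$ of $\mathcal{I}$. By the definition in subsection \ref{ssec:Multiplier}, a germ $f$ lies in $\mathcal{J}(\mathcal{I})$ iff $\ord_G(f\circ p_W)\geq \ord_G(\mathcal{I}\cdot\mathcal{O}_W)-\ord_G K_{W/X}$ for every prime divisor $G$ on $W$. Applying this to $G=F'$ only gives the lower bound $\ord_F\mathcal{J}(\mathcal{I})\geq b-a$, which points the wrong way. The actual estimate must use that $F$ appears on the normalized blow-up, i.e.\ that $\ord_F$ is a \emph{Rees valuation} of $\mathcal{I}$. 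The plan is as follows.

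(1) Rees valuations are computed by a monomial-type structure. Near a general point of $F$, the ideal $\mathcal{I}\cdot\mathcal{O}_Y$ is locally principal. Choosing $n$ general elements $g_1,\dots,g_n$ of $\mathcal{I}$ (or of a reduction of $\mathcal{I}$), the map $G=(g_1,\dots,g_n):(X,x)\to(\C^n,0)$ is finite near $x$ onto its image. Moreover $\ord_F$ is the pullback, via $G$, of a valuation on $\C^n$ dominated by a toric/divisorial valuation of the maximal ideal.

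(2) Via Skoda's or the Brian\c{c}on–Skoda theorem, $\mathcal{J}(\mathcal{I}^{n})\subset\mathcal{I}$ and more precisely $\mathcal{J}(\mathcal{I}^{m})\subset \mathcal{I}^{m-n+1}$. Combining this with subadditivity and integral-closure arguments, I will aim at an estimate of the form
\[
a+1 \;\leq\; A_F \;\leq\; n\bigl(b-\ord_F\mathcal{J}(\mathcal{I})\bigr)+\ldots,
\]
where $A_F=a+1$ is the log discrepancy. The intended route is Boucksom's analytic argument: take $f\in\mathcal{J}(\mathcal{I})$ realizing $\ord_F f=\ord_F\mathcal{J}(\mathcal{I})$, and show $|f|^2e^{-\varphi}\in L^1_{loc}$. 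Pulling back to $Y$ near a general point of $F$ and integrating in coordinates $(z_1,z')$ with $F=\{z_1=0\}$ gives an integrability condition of the form $\ord_F f - b + a > -1$. Once again this is only a lower bound on $\ord_F f$.

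I therefore expect the reverse direction to be the main obstacle, and the key is the Jacobian. Since $Y$ is the normalized blow-up, $K_{Y/X}$ near the generic point of $F$ is controlled by the Jacobian of the map to the blow-up chart: $\mathcal{O}_Y(-K_{Y/X})\supset$ the pullback of the Jacobian ideal of $(g_1,\dots,g_n)$, modulo powers of $\mathcal{I}$. Concretely, in a chart where $g_1$ generates $\mathcal{I}\mathcal{O}_Y$, the functions $g_1, g_2/g_1,\dots,g_n/g_1$ give local parameters up to a finite map. Hence the $n$-form $dg_1\wedge\cdots\wedge dg_n$ pulls back to $g_1^{\,n-1}\,dg_1\wedge d(g_2/g_1)\wedge\cdots$, which yields
\[
a \;\leq\; \ord_F\bigl(\mathrm{Jac}(g)\bigr) - (n-1)b+\ldots .
\]
Finally I will use the classical fact (Lipman–Teissier/Skoda) that the Jacobian $\mathrm{Jac}(g)$ of a reduction lies in $\mathcal{J}(\mathcal{I}^{n})$-type ideals, together with $\ord_F\mathcal{J}(\mathcal{I}^n)\leq$ something controlled through $\mathcal{J}(\mathcal{I})$ by subadditivity $\mathcal{J}(\mathcal{I}^n)\subset\mathcal{J}(\mathcal{I})^n$. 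This gives $\ord_F\mathcal{J}(\mathcal{I}^n)\geq n\,\ord_F\mathcal{J}(\mathcal{I})$. Matching these inequalities in the right direction should produce $a\leq nb-n\,\ord_F\mathcal{J}(\mathcal{I})$. Getting the signs and the direction of the Jacobian containment right is the delicate part. I would first test it on monomial ideals, where both sides are computable by Howald's theorem, before running the general argument.
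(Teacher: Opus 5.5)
There is a genuine gap. The only nontrivial direction is never established: an \emph{upper} bound on $a=\ord_F K_{Y/X}$ in terms of $b-c$, where $b=\ord_F\mathcal{I}$ and $c:=\ord_F\mathcal{J}(\mathcal{I})$. Equivalently, you need to exhibit elements of $\mathcal{J}(\mathcal{I})$ of order at most $b-a/n$ along $F$. The tools you list do not give this, even once the signs are fixed.

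Your Jacobian step is reversed. From $p^*(dg_1\wedge\cdots\wedge dg_n)=g_1^{n-1}\,dg_1\wedge d(g_2/g_1)\wedge\cdots\wedge d(g_n/g_1)$ one gets $a=(n-1)b-\ord_F\mathrm{Jac}(g)+\ord_F\bigl(dg_1\wedge d(g_2/g_1)\wedge\cdots\bigr)$. In the $\mathfrak{m}_x$-primary case the map $Y\to\mathrm{Bl}_0\C^n$ is finite with ramification index $b$ along $F$, and this becomes the identity $a+1=nb-\ord_F\mathrm{Jac}(g)$. So the target is equivalent to $nc\le\ord_F\mathrm{Jac}(g)+1$, which is an upper bound on $c$ and again requires producing good elements of $\mathcal{J}(\mathcal{I})$.

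Your containment claim is also false. $\mathrm{Jac}(g)\notin\mathcal{J}(\mathcal{I}^n)$ already for $\mathcal{I}=\mathfrak{m}$, where $\mathrm{Jac}=1$ and $\mathcal{J}(\mathfrak{m}^n)=\mathfrak{m}$. In the $\mathfrak{m}_x$-primary case one only has $\mathrm{Jac}(g)\in\mathcal{J}(\mathcal{I}^{\lambda})$ for $\lambda<n$, and $\mathfrak{m}_x\cdot\mathrm{Jac}(g)\subseteq\mathcal{J}(\mathcal{I}^n)$. Running your plan with these correct statements gives only weaker bounds:
\begin{itemize}
\item via $\mathrm{Jac}(g)\in\mathcal{J}(\mathcal{I}^{n-1})\subseteq\mathcal{J}(\mathcal{I})^{n-1}$ you get $a\le nb-(n-1)c-1$, which misses the target by $c-1$; for $\mathcal{I}=\mathfrak{m}^k\subset\mathcal{O}_{\C^2}$ it yields $a\le k$ instead of $a\le 2$;
\item via $\mathfrak{m}_x\mathrm{Jac}(g)\subseteq\mathcal{J}(\mathcal{I}^n)\subseteq\mathcal{J}(\mathcal{I})^n$ you get $a\le n(b-c)+\ord_F\mathfrak{m}_x-1$, which misses whenever $\ord_F\mathfrak{m}_x\ge 2$, e.g.\ $\mathcal{I}=(x^p,y^q)$ with $p,q\ge 2$ coprime.
\end{itemize}
Bounds on $\mathcal{J}(\mathcal{I}^{1-\delta})$ do not help either, since that ideal contains $\mathcal{J}(\mathcal{I})$. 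Finally, when $\dim p(F)>0$ the ideal is not $\mathfrak{m}_x$-primary at a general point of $p(F)$, so the finite map $G$ of your step (1) is not available without a slicing or reduction argument.

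The paper does not use reductions or Jacobians at all. It uses that $-D$ is $p$-ample on the normalized blow-up (\cite[Lem. 8.2]{Bou17Notes}), where $\mathcal{I}\cdot\mathcal{O}_Y=\mathcal{O}_Y(-D)$. It then runs the analytic argument of \cite[Prop. 3.1]{Tru26}, for which any K\"ahler form $\omega$ on $X$ suffices, integral or not. That quantitative use of the relative ampleness of $-D$ is the missing idea. Your combination of Skoda, subadditivity and Jacobian containments does not substitute for it.
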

\begin{proof}
    Letting $D$ be the effective divisor such that $\mathcal{O}_Y\cdot p^{-1}\mathcal{I}=\mathcal{O}_Y(-D)$, it is well-known that $-D$ is p-ample as a consequence of the construction of the normalized blow-up (see, for instance, \cite[Lem. 8.2]{Bou17Notes}). Thus, the same analytic proof of \cite[Prop. 3.1]{Tru26} applies, where clearly $\om$ is a fixed K\"ahler form not necessarily belonging to an integral class.
\end{proof}

We will also need the following result, which is well known to the experts.
\begin{lem}\label{lem:Functions_vs_Ideals}
    Let $u_1, u_2$ be two q-psh functions with analytic singularities respectively of type $(\mathcal{I}_1,1)$ and $(\mathcal{I}_2,1)$ for integrally closed coherent ideal sheaves $\mathcal{I}_1,\mathcal{I}_2$. Then $u_1\preccurlyeq u_2$ if and only if $\mathcal{I}_1\subset \mathcal{I}_2$.
\end{lem}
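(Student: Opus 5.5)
The plan is to reduce both implications to a comparison of divisorial orders on a common log resolution, using the fact (subsection \ref{ssec:LelongSiu}) that Lelong numbers of functions with analytic singularities are computed by $\ord_F$ of the pulled-back ideal. Fix a log resolution $p:Y\to X$ of $\mathcal{I}_1\cdot\mathcal{I}_2$, with $p^{-1}\mathcal{I}_i\cdot\mathcal{O}_Y=\mathcal{O}_Y(-D_i)$ for $i=1,2$.

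The implication $\mathcal{I}_1\subset\mathcal{I}_2\Rightarrow u_1\preccurlyeq u_2$ is the easy direction. I will work locally on a finite cover by coordinate balls. Write $\mathcal{I}_1=(f_1,\dots,f_r)$ and $\mathcal{I}_2=(g_1,\dots,g_s)$ there. Each $f_j\in\mathcal{I}_2$ gives $f_j=\sum_l a_{jl}g_l$ with $a_{jl}$ holomorphic, hence bounded on a slightly smaller ball. Then $\log\sum|f_j|^2\le\log\sum|g_l|^2+C$, and since the functions $g$ in the local expressions are bounded, $u_1\le u_2+C$ locally. Compactness then gives a global constant $C$.

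For the converse, assume $u_1\le u_2+C$. Pulling back by $p$, the Lelong numbers satisfy $\nu(u_1\circ p,F)\ge\nu(u_2\circ p,F)$ for every prime divisor $F\subset Y$. By \eqref{eqn:Lelong_Anal_Sing} this reads $\ord_F D_1\ge\ord_F D_2$, so $D_1\ge D_2$ as divisors. It remains to deduce $\mathcal{I}_1\subset\mathcal{I}_2$ from $D_1\ge D_2$ together with integral closedness. For this I will use the standard description of the integral closure of an ideal through a log resolution, valid in the analytic setting:
$$
\overline{\mathcal{I}}=p_*\mathcal{O}_Y(-D).
$$
With it,
$$
\mathcal{I}_1\subset p_*\mathcal{O}_Y(-D_1)\subset p_*\mathcal{O}_Y(-D_2)=\overline{\mathcal{I}_2}=\mathcal{I}_2.
$$

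The main obstacle is justifying $\overline{\mathcal{I}}=p_*\mathcal{O}_Y(-D)$ in the complex analytic category. The inclusion $\supseteq$ is the nontrivial one. If that is not available directly, I would instead use the valuative criterion: a germ $f$ lies in $\overline{\mathcal{I}}$ if and only if $|f|\le C\sum_j|f_j|$ locally. This can be checked on $Y$, where $p^*\mathcal{I}$ is locally principal and generated by a monomial-type function $h$ with $\mathrm{div}(h)=D$. If $f\circ p$ vanishes to order at least $D$, then $|f\circ p|\le C|h|\le C'\sum_j|f_j\circ p|$ on compact subsets of $Y$. This inequality descends to $X$ because $p$ is proper, and so $f\in\overline{\mathcal{I}}$. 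Everything else is bookkeeping.
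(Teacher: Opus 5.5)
Your proposal is correct and follows the paper's proof essentially verbatim. The easy direction goes through local generators and compactness. For the converse, you compare Lelong numbers on a common log resolution to get $D_1\ge D_2$ and then push forward, using that an integrally closed ideal equals $p_*\mathcal{O}_Y(-D)$. The paper takes that identity for granted, whereas you justify the needed inclusion $p_*\mathcal{O}_Y(-D_2)\subset\overline{\mathcal{I}_2}$ via the analytic growth characterization of integral closure; that is what you are actually using, not a valuative criterion. You also correctly observe that only the integral closedness of $\mathcal{I}_2$ is needed.
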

\begin{proof}
    If $\mathcal{I}_1\subset \mathcal{I}_2$ then, taking open sets on which $\mathcal{I}_1$ and $\mathcal{I}_2$ admit local generators, we immediately obtain that $u_1\preccurlyeq u_2$ from the definition of functions with analytic singularities and the compactness of $X$.

    Conversely, assume that $u_1\preccurlyeq u_2$ and let $p:Y\to X$ be a common log resolution of $\mathcal{I}_1,\mathcal{I}_2$ and let $D_1,D_2$ be the two effective divisors such that $\mathcal{O}_Y\cdot p^{-1} \mathcal{I}_i =\mathcal{O}_Y(-D_i)$ for $i=1,2$. Let also $\om$ be a K\"ahler class such that $u_1,u_2\in \PSH(X,\om)$. As $\nu(u_1\circ p ,F)\geq \nu(u_2\circ p,F)$ for any prime divisor $F$, from the Siu Decompositions of $p^*\om_{u_1},p^*\om_{u_2}$ (see subsection \ref{ssec:LelongSiu}) we deduce that $D_1\geq D_2$. In particular $\mathcal{I}_1=p_*\mathcal{O}_Y(-D_1)\subset p_*\mathcal{O}_Y(-D_2)=\mathcal{I}_2$.
\end{proof}

    We are now ready to prove Theorem \ref{thmB}. 
    
    \begin{proof}[Proof of Theorem \ref{thmB}] The proof follows the strategy of \cite[Thm. B]{Tru26}.

    Letting $p:Y\to X$ be a modification between compact K\"ahler manifolds, by \cite[Prop. 2.5]{Tos18} we have $\mathrm{E}_{\mathrm{nK}}(p^*\alpha)=p^{-1}\mathrm{E}_{\mathrm{nK}}(\alpha)\cup \Exc(p)$. In particular, by Theorem \ref{conj:Ort}, $\langle (p^*\alpha)^{n-1}\rangle\cdot K_{Y/X}=0$ since $\mathrm{Supp}(K_{Y/X})\subset \Exc(p)$. Therefore $\alpha$ admits a Special K\"ahler Fujita Approximation if and only if $p^*\alpha$ does, and without loss of generality we can and will assume that the non-K\"ahler locus $\mathrm{E}_{\mathrm{nK}}(\alpha)$ has divisorial support. Therefore, letting $\theta\in \alpha$ be a smooth representative and $\om$ be a fixed K\"ahler form, there exists $\psi\in \mathcal{A}(X,\theta)$ such that $\theta_\psi=T+c[E]$ for $T$ closed and positive current with bounded potentials satisfying $T\geq\delta\om$ for $\delta>0$, $E$ effective divisor such that $\mathrm{E}_{\mathrm{nK}}(\alpha)=\mathrm{Supp}(E)$ and $c>0$ (see subsections \ref{ssec:Non-Kahler}, \ref{ssec:LelongSiu}). Unless rescaling, we can and will suppose that $c=1,\delta=1$.
    
    \textbf{Step 1: It suffices to prove the existence of Special Nef Fujita Approximations.}    
    Suppose that $(p_k:Y_k\to X, \beta_k,E_k)$ is a Special Nef Fujita Approximation of $\alpha$. Without loss of generality, we may assume that $p_k$ is given as compositions of blowups along smooth centers. In particular, there are $p_k$-exceptional effective $\R$-divisors $F_k$ such that $p_k^*\{\om\}-\{F_k\}$ are K\"ahler classes. 
    A simple calculation gives
    \begin{align*}
        p_k^*\alpha &= a(\beta_k+\{E_k\})+ (1-a)\left(p_k^*\{T\} +p_k^*\{E\}\right)\\
        &= a \beta_k + (1-a)(p_k^*\{T\}-\{F_k\}) + a\{E_k\}+ (1-a)p^*_k \{E\} + (1-a)\{F_k\}
    \end{align*}
    for any $a\in (0,1)$. Set $\beta_{k,a}:=a\beta_k+(1-a)(p_k^*\{T\}-\{F_k\})$ and $E_{k,a}:= aE_k+ (1-a)p^*_k E + (1-a) F_k$, noting that  $E_{k,a}$ are effective $\R$-divisors while $\beta_{k,a}$ are K\"ahler classes. Indeed, $T-\om$ is a closed and positive current with bounded potentials; thus, $p_k^*\{T\}-\{F_k\}=p_k^*\{T-\om\}+p_k^*\{\om\}-\{F_k\}$ is K\"ahler being the sum of a nef class and a K\"ahler class.
    By continuity, we can then choose $a_k\in(0,1)$ so that
    \begin{gather*}
        \beta_{k,a_k}^n\geq \beta_k^n-\frac{1}{k},\quad \quad \beta_{k,a_k}^{n-1}\cdot K_{Y_k/X}\leq \beta_k^{n-1}\cdot K_{Y_k/X}+\frac{1}{k}
    \end{gather*}
    for any $k\in\N$. It follows from Lemma \ref{lem:Special_H} that $(p_k:Y_k\to X, \beta_{k,a_k}, E_{k,a_k})$ is a Special K\"ahler Fujita Approximation of $\alpha$.

    \textbf{Step 2: A bound for multiplier ideal sheaves.}
    Set $\mathfrak{a}_k:=\mathcal{J}(kV_\theta)$ and $\mathfrak{b}_k:=\mathcal{J}(\mathfrak{a}_k)$. We want to prove that there exists a constant $k_0\in \N$ such that
    \begin{equation}\label{eqn:Desired_Inclusion}
        \mathfrak{b}_k \cdot \mathcal{O}_X(-k_0E)\subset \mathfrak{a}_k
    \end{equation}
    for any $k\in\N$. By Lemma \ref{lem:Uniformity} below (cf. \cite[Prop. 0.1]{Pop03}) there exists a sequence $\{\phi_k\}_k$ of q-psh functions with analytic singularities of type $\left(\mathfrak{a}_k,1\right)$ and a constant $C>0$ such that $k\theta+\ddc \phi_k\geq -C\om$ for any $k\in\N$. Applying Lemma \ref{lem:Uniformity} again, there also exists a sequence of q-psh functions $\{\tilde{\phi}_k\}_{k}$ with analytic singularities of type $(\mathfrak{b}_k,1)$ such that $k\theta + \ddc \tilde{\phi}_k \geq -2C \om$. As $\psi\in \mathcal{A}(X,\theta-\omega)$, letting $k_0\in\N $ such that $k_0\geq 2C$, we deduce that $\tilde{\phi}_k + k_0\psi\in \mathcal{A}\left(X,(k+k_0)\theta\right)$. In particular
    $$
    \tilde{\phi}_k+k_0\psi\preccurlyeq (k+k_0)V_\theta\preccurlyeq \phi_{k+k_0}
    $$
    for any $k\in\N$ (cf. subsection \ref{ssec:Multiplier}). As multiplier ideal sheaves are integrally closed (see subsection \ref{ssec:Multiplier}), from Lemma \ref{lem:Functions_vs_Ideals} we obtain
    $$
    \mathfrak{b}_k\cdot \mathcal{O}_X(-k_0E)\subset \mathfrak{a}_{k+k_0},
    $$
    and the desired inclusion \eqref{eqn:Desired_Inclusion} follows from $\mathfrak{a}_{k+k_0}\subset\mathfrak{a}_k$.

    \textbf{Step 3: The chosen Special Nef Fujita Approximation.} We define
    $$
    \varphi_k:=\frac{1}{k+k_0}\left(\phi_k+k_0\psi\right)\in\PSH(X,\theta),
    $$
    and we want to prove that $\{\varphi_k\}_k$ is a Special Nef Fujita Approximation. We start by noticing that $\phi_k/k\succcurlyeq V_\theta$ as  $\phi_k/k$ has analytic singularities of type $(\mathcal{J}(kV_\theta),1/k)$ and $\phi_k/k\in \PSH\left(X,\theta+\frac{C}{k}\om\right)$. Thus, as $k_0\geq 2C$ we have
    \begin{align*}
        \Vol(\alpha)&\geq \int_X \langle \theta_{\varphi_k}^n \rangle=\int_X \left\langle\left( \frac{k}{k+k_0}\left(\theta+\frac{C}{k}\om+\ddc \phi_k/k\right)+\frac{k_0}{k+k_0}\left(\theta-\frac{C}{k_0}\om+\ddc \psi\right) \right)^n\right\rangle\\
        &\geq \left(\frac{k}{k+k_0}\right)^n\int_X \left\langle\left(\theta+\frac{C}{k}\omega +\ddc \phi_k/k\right)^n \right\rangle\geq \left(\frac{k}{k+k_0}\right)^n\int_X \left\langle\left(\theta+\frac{C}{k}\omega +\ddc V_\theta\right)^n \right\rangle\\
        &\geq \left(\frac{k}{k+k_0}\right)^n\int_X \langle(\theta_{V_\theta})^n \rangle=\left(\frac{k}{k+k_0}\right)^n\Vol(\alpha)
    \end{align*}
    by the monotonicity of the non-pluripolar product (cf. subsection \ref{ssec:Non_Pluripolar}). In particular $\int_X \langle \theta_{\varphi_k}^n \rangle\longrightarrow \Vol(\alpha)$, i.e. $\{\varphi_k\}_k$ produces a Nef Fujita Approximation by Lemma \ref{lem:ClassicFujita}.
    
    By construction $\varphi_k$ has analytic singularities of type $\left( \mathfrak{a}_k\cdot \mathcal{O}_X(-k_0E),1/(k+k_0) \right)$. Considering a log resolution $p_k:Y_k\to X$ of $\mathcal{I}_k:=\mathfrak{a}_k\cdot \mathcal{O}_X(-k_0E)$, by the universal property of normalized blowups we have the following commutative diagram
    \[\begin{tikzcd}
    Y_k \arrow{rr}{q_k} \arrow[swap]{dr}{p_k} & & Z_k \arrow{dl}{\pi_k} \\
    & X &
    \end{tikzcd},
    \]
    where $\pi_k:Z_k\to X$ is the normalized blow-up along $\mathcal{I}_k$. Since $J(\mathcal{I}_k)=\mathfrak{b}_k\cdot O_X(-k_0E)$, by \eqref{eqn:Desired_Inclusion} it follows that
    $$
    \ord_F \mathcal{I}_k-\ord_F J(\mathcal{I}_k)\leq \ord_F  \mathcal{O}_X(-k_0E)
    $$
    for any $F$ over $X$, i.e. for any prime divisor $F\subset Y$ on an $n$-dimensional normal compact K\"ahler variety $Y$ that dominates $X$ through a modification $p:Y\to X$. Thus, Proposition \ref{thm:Discrepancies_Bouck} leads to
    $$
    \beta_k^{n-1}\cdot q_k^*K_{Z_k/X}\leq n k_0\, \beta_k^{n-1}\cdot q_k^*\pi_k^*E = nk_0\, \beta_k^{n-1}\cdot p_k^*E,
    $$
    where we denoted by $\beta_k$ the nef classes on $Y_k$ associated to $\varphi_k$ through Lemma \ref{lem:ClassicFujita}. Hence
    \begin{equation}
        \label{eqn:NewHome1}
        \limsup_{k\to +\infty} \beta_k^{n-1}\cdot q_k^* K_{Z_k/Y_k}\leq nk_0\, \limsup_{k\to +\infty} \beta_k^{n-1}\cdot p_k^*E= nk_0\, \langle \alpha^{n-1} \rangle\cdot E=0,
    \end{equation}
    where the second-to-last equality can be easily checked as in the proof of Lemma \ref{lem:Special_H}, while the last equality follows from Theorem \ref{conj:Ort} since $\mathrm{Supp}(E)=\mathrm{E}_{\mathrm{nK}}(\alpha)$.

    On the other hand, letting $D_k,E_k$ be the effective divisor such that $\mathcal{O}_{Z_k}\cdot \pi_k^{-1}\mathcal{I}_k=\mathcal{O}_{Z_k}(-D_k)$ and $\mathcal{O}_{Y_k}\cdot p_k^{-1}\mathcal{I}_k=\mathcal{O}_{Y_k}(-E_k)$, by construction there exist closed and positive currents $T_k,S_k$ with bounded potentials such that
    $$
    T_k+\frac{1}{k+k_0}[E_k]=p^*_k \theta_{\varphi_k}= q_k^*\pi_k^*\theta_{\varphi_k}= q^*_k\left(S_k+\frac{1}{k+k_0}[D_k]\right)=q_k^*S_k + \frac{1}{k+k_0}[q_k^*D_k].
    $$
    By uniqueness of the Siu's Decomposition (see subsection \ref{ssec:LelongSiu}) we deduce that $T_k=q_k^*S_k$ and that $E_k=q_k^*D_k$. In particular, letting $\gamma_k:=\{S_k\}$, we have $\beta_k=q_k^*\gamma_k$. By Projection Formula we deduce that
    \begin{equation}
        \label{eqn:NewHome2}
        \beta_k^{n-1}\cdot K_{Y_k/Z_k}=q_k^*\gamma_k^{n-1}\cdot K_{Y_k/Z_k}=0.
    \end{equation}
    Since $K_{Y_k/X}=q_k^*K_{Z_k/X}+K_{Y_k/Z_k}$, combining \eqref{eqn:NewHome1} and \eqref{eqn:NewHome2} we obtain
    $$
    \limsup_{k\to +\infty}\beta_k^{n-1}\cdot K_{Y_k/X}=0,
    $$
    i.e. the chosen Nef Fujita Approximation is Special by Lemma \ref{lem:Special_H}. This concludes the proof.
    \end{proof}

    \begin{lem}\label{lem:Uniformity}
        Let $u$ be a q-psh function such that $c_1\theta_1+\cdots +c_M\theta_M+\ddc u\geq -c \om$ for $\theta_j$ smooth closed $(1,1)$-forms, $\om$ K\"ahler form and $c_j>0,c>0$. Then there exists a q-psh function $\tilde{u}$ with analytic singularities of type $\left(\mathcal{J}(u),1\right)$ such that $c_1\theta_1+\cdots+ c_M\theta_M+\ddc \tilde{u}\geq -(c+A)\om$ for a constant $A>0$ only depending on $\om$ and on $\theta_j$.
    \end{lem}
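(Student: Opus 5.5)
The plan is to follow Demailly's construction of analytic approximations via Bergman kernels, in the uniform version computed by Popovici \cite[Prop. 0.1]{Pop03}. The key point is that the loss of positivity depends only on a fixed finite cover of $X$ and on the curvature of the reference forms, not on $u$. Set $\Theta:=c_1\theta_1+\cdots+c_M\theta_M$, so the hypothesis reads $\Theta+c\om+\ddc u\geq 0$.

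First I fix once and for all a finite cover of $X$ by coordinate balls $B_j\Subset B_j'$ and smooth local potentials $g_j$ on $B_j'$ with $\ddc g_j=\Theta$. On each $B_j'$ the function $\rho_j:=u+g_j+c\,h_j$ is psh, where $h_j$ is a strictly psh potential with $\ddc h_j=\om$. The difficulty is that $c\,h_j$ depends on $c$ and $g_j$ on the $c_i$. I would therefore work globally instead. I use the sheaf $\mathcal{J}(u)$ directly: it is coherent by Nadel, and on each $B_j$ I take a finite Hilbert basis $\{f_{j,l}\}$ of the space $\mathcal{H}_j$ of holomorphic $f$ on $B_j'$ with $\int_{B_j'}|f|^2e^{-u}\,dV<\infty$, orthonormal for the weight $e^{-u}$. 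By the strong Noetherian property, these generate $\mathcal{J}(u)$ on a slightly smaller ball, and the Ohsawa--Takegoshi extension theorem gives the lower bound below.

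The local functions are $\tilde u_j:=\log\sum_l|f_{j,l}|^2$ (a countable sum, whose singularity type is $\mathcal{J}(u)$ on the smaller ball by Noetherianity). They satisfy two comparisons. The mean value inequality gives the upper bound $\tilde u_j\leq u+O(1)$ locally; this uses only that $u+g_j+ch_j$ is psh, and it is harmless since only singularity types matter. Ohsawa--Takegoshi gives the lower bound $\tilde u_j\geq u-C_j$, where the constant is controlled by the psh weight $u+g_j+ch_j$ and its sup on $B_j'$. Up to modifying the weight, the orthonormal basis can be taken with respect to $e^{-(u+g_j+ch_j)}$; this changes neither the ideal nor the singularity type.

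The last step glues $\tilde u:=\log\sum_j\chi_j^2e^{\tilde u_j}$ with a partition of unity, as in Demailly's regularization. Then $\ddc\tilde u\geq -(\Theta+c\om)-A\om$. The error $A$ comes from the derivatives of the $\chi_j$ and from the transition differences $|\tilde u_j-\tilde u_k|$ on overlaps, which are bounded by the difference of the local weights $g_j-g_k$ and $h_j-h_k$. This is the main obstacle: keeping $A$ independent of $c$ and $u$. The differences $c(h_j-h_k)$ grow linearly in $c$, so the naive bound gives an $A$ depending on $c$. To handle this I would follow Popovici's trick: choose the local potentials of $\om$ so that $h_j-h_k$ is small on overlaps, by taking balls of small radius with $h_j=|z|^2$ plus pluriharmonic corrections, up to $O(r^3)$. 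The $c$-dependent error is then absorbed into a $(1+\varepsilon)c\om$ term; after rescaling the cover once, it becomes an additive constant depending only on $\om$ and the $\theta_j$. If this sharp absorption fails, the fallback is to apply Popovici's estimate to $k\theta$-type weights directly, which is all that Step 2 needs.
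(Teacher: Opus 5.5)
You have located the crux correctly---$A$ must be independent of $c$ and, above all, of the $c_j$, since in Step~2 the lemma is applied with $c_1\theta_1=k\theta$, $k\to+\infty$---but your mechanism does not deliver this, so there is a genuine gap.

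First, you glue the wrong objects. $\tilde u_j=\log\sum_l\lvert f_{j,l}\rvert^2$, built from the psh weight $\rho_j=u+g_j+ch_j$, approximates $\rho_j$ rather than $u$. Its plurisubharmonicity gives no bound $\Theta+c\om+\ddc\tilde u\geq-A\om$ for $\tilde u=\log\sum_j\chi_j^2e^{\tilde u_j}$, since $\Theta+c\om$ may be very negative. The functions that carry the curvature information are $\varphi_j:=\tilde u_j-g_j-ch_j$, which satisfy $\Theta+c\om+\ddc\varphi_j\geq0$ on $B_j$; these are what must be glued.

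Second, the absorption step fails. Making $c\lvert h_j-h_k\rvert\leq\varepsilon c$ on overlaps still leaves an error of order $\varepsilon c\,\om$, which grows with $c$. Letting $\varepsilon$ depend on $c$ forces the cover, and hence the cutoff errors, to depend on $c$. The differences $g_j-g_k$, which grow with the $c_j$, are not treated at all. The fallback of applying Popovici's estimate directly is just the statement to be proved. (Incidentally, $\tilde u_j\leq u+O(1)$ is false: if $u=\frac12\log\lvert z_1\rvert^2$ locally, $\mathcal{J}(u)$ is trivial there while $u\to-\infty$. Fortunately this bound is not needed.)

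The missing idea is that no smallness of the transition functions is required, because they are pluriharmonic. Since $\ddc\rho_j=\ddc\rho_k$, after shrinking the cover one has $\rho_j-\rho_k=\mathrm{Re}\,H_{jk}$ with $H_{jk}$ holomorphic on $B_j\cap B_k$. Then $f\mapsto fe^{-H_{jk}/2}$ is an isometry from the holomorphic $L^2(e^{-\rho_j})$-space to the holomorphic $L^2(e^{-\rho_k})$-space on $B_j\cap B_k$. Hence the approximants $\varphi_j^{B_j\cap B_k}$ and $\varphi_k^{B_j\cap B_k}$, computed with the Bergman kernels of $B_j\cap B_k$, \emph{coincide}, whatever $u$, $c$, $c_j$ are.

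The only remaining error comes from changing the domain. Restriction gives $\varphi_j\leq\varphi_j^{B_j\cap B_k}$. A cutoff plus H\"ormander $L^2$-extension argument gives $\varphi_k^{B_j\cap B_k}\leq\varphi_k+C(\delta)$ on the gluing region, with $C(\delta)$ independent of the psh weight; this weight-independence is what the paper extracts from \cite[Lem. 1.2]{Pop03}. Thus $\varphi_j-\varphi_k\leq C(\delta)$ there. Gluing by $\sup_j\{\varphi_j+C_1(\delta^2-\lVert z^j\rVert^2)\}$, as in the paper, then costs only $C_1C_2\om$ with $C_1$ depending on $\delta$ alone. This is why $A$ depends only on $\om$ and the $\theta_j$.
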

    \begin{proof}
        This result follows from \cite{Pop03}, which in turn is based on the gluing procedure of \cite{Dem92}. For completeness, we sketch the proof.
        
        Using the same notation as in \cite{Pop03}, let $ \mathcal{U}:=\{\B_j\}_j, \mathcal{U}^{''}:=\{\B^{''}_j\}_j, \mathcal{U}^{'}:=\{\B_j^{'}\}_j, \mathcal{U}^{(3)}:=\{\B_j^{(3)}\}$ be finite coverings of $X$ given by concentric balls of radii respectively $2\delta, \frac{3}{2}\delta, \delta$ and $\delta/2$ for $\delta>0$, centered in holomorphic coordinates $z^j:=(z_1^j,\dots,z_n^j)$. Without loss of generality we can suppose that $\theta_s=\ddc f_j^s, \om=\ddc g_j$ over $\B_j$ for smooth functions $f_j^s, g_j$, for all $j$ and $s=1,\dots, M$. Since $f_j^s-f_k^s, g_j-g_k$ are pluriharmonic, after possibly shrinking the balls $B_j$, we can assume that $f_j^s-f_k^s=\mathrm{Re}\, f^s_{jk}, g_j-g_k=\mathrm{Re}\, g_{jk}$ on $\B_j\cap \B_k$ for holomorphic functions $f_{jk}^s, g_{jk}$ on $\B_j\cap \B_k$. Note that this setting only depends on $\om, \theta_s$ but not on $c_s, c$. We also set $h_j:=-\sum_{s=1}^M c_s f_j^s-c g_j, h_{jk}:=-\sum_{s=1}^M c_s f^s_{jk}-c g_{jk}$, so that $h_j-h_k=\mathrm{Re}\, h_{jk}$ on $\B_j\cap \B_k$. 
        
        Next, the functions $v_j:= u - h_j$ are plurisubharmonic on $\B_j$ for any $j$. We then let $\{\sigma_{j,l}\}_l$ be an orthonormal basis of the Hilbert space $\mathcal{H}_{\B_j}(v_j):=\left\{f\in \mathcal{O}(\B_j)\, : \, \int_{\B_j}\lvert f_j\rvert^2e^{-v_j}\,d\lambda<+\infty \right\}$, and we set $\psi_j:=\log \sum_{l}\lvert \sigma_{j,l} \rvert^2 $ and $\varphi_j:=\psi_j+h_j$. The key point is to glue these quasi-psh functions. Indeed, it is shown in \cite{Pop03} how the function
        $$
        \tilde{u}(z):=\sup_{z\in \B^{''}_j} \left\{\varphi_j(z) + C_1\left(\delta^2-\lVert z^j\rVert^2\right) \right\}
        $$
        gives a q-psh function such that $\ddc \tilde{u}\geq -\sum_s c_s\theta_s-c\om- C_1 C_2\om$ where $C_2$ is such that $\ddc \lVert z^j \rVert^2\leq C_2\om$ on $\B_j$ for all $j$ while $C_1$ is chosen sufficiently large so that $\varphi_j(z)+C_1\left(\delta^2-\lVert z^j\rVert^2\right)\leq \varphi_k(z)+C_1\left(\delta^2-\lVert z^k\rVert^2\right) $ for any $z\in \B_k^{(3)}\cap\left(\overline{\B''_j}\setminus \B'_j\right)$, for all $k, j$. In particular, $\tilde{u}$ has analytic singularities of type $\left(\mathcal{J}(u),1\right)$ (cf. \cite[Prop. 1.2]{DEL00}) and to conclude the proof it remains to check the dependence of $C_1$. But as explained at the \cite[End of Page 63]{Pop03}, by \cite[Lem. 1.2]{Pop03}, the constant $C_1$ can be chosen to depend only on $\delta$.
    \end{proof}

    \section{Solution to the transcendental Yau-Tian-Donaldson Conjecture}\label{sec:6}

    In this section we conclude the proof of the Yau-Tian-Donaldson Conjecture of Theorem \ref{thmA}.

    \begin{proof}[Proof of Theorem \ref{thmA}]
        The implication $(i)\Rightarrow (ii)$ has already been proved in \cite[Cor. 5.5]{Sjo18}, \cite[Thm. 1.1]{DR17}. Thus, thanks to Theorem \ref{thm:Pietro_David}, it is enough to prove that uniform $K$-stability implies the analogous notion for models.

        Let $(\cX,\cA+\cD)$ be a fixed big test configuration (we recall that by our assumptions, this in particular means that $(\cX,\cA+\cD)$ is dominating and smooth, cf. subsection \ref{ssec:K-stability}). An easy check shows that the Donaldson-Futaki invariant and the Non-Archimedean $J$-functional are both invariant under the change $\cD\leadsto \cD+c\cX_0$ for $c>0$. Thus we can assume that $\cD\geq \cX_0$ without loss of generality. It follows that $\mathrm{E}_{\mathrm{nK}}(\cA+\cD)\subset \cX_0$ as we can produce a K\"ahler current with analytic singularities that is smooth outside the zero fiber.

        By Theorem \ref{thmB} there exists a Special K\"ahler Fujita Approximation of $(\cX,\cA+\cD)$ given by blowing-up smooth centers in the zero fibers. Moreover by construction of the Special K\"ahler Fujita Approximation in Theorem \ref{thmB} such smooth centers can be chosen to be $\C^*$-invariant. In particular, the $\C^*$-action lifts through each blow-up, and hence the resulting approximations are Kähler test configurations $(\cX_k,\cA_k+\cD_k)$ such that
        \begin{equation}
            \label{eqn:Needed}\langle(\cA_k+\cD_k)^{n+1}\rangle\longrightarrow\langle (\cA+\cD)^{n+1} \rangle,\quad \langle(\cA_k+\cD_k)^n\rangle\cdot K_{\cX_k}\longrightarrow \langle (\cA+\cD)^n\rangle\cdot K_{\cX}
        \end{equation}
        as $k\to +\infty$. Moreover, denoting by $p_k:\cX_k\to \cX_0$ and by $\pi:\cX_0\to X\times \P^1$ the modifications obtained by this construction, from the fact that $\{(\cX_k,\cA_k+\cD_k)\}_k$ is associated to a K\"ahler Fujita Approximation of $(\cX,\cA+\cD)$ we also have that $\langle(\cA_k+\cD_k)^n\rangle\cdot p_k^*\pi^*p_2^*K_{\P^1}\to \langle(\cA+\cD)^n\rangle\cdot\pi^*p_2^*K_{\P^1} $ as $k\to +\infty$ (cf. the proof of Lemma \ref{lem:Special_H}). This together with \eqref{eqn:Needed} leads to
        \begin{equation}
            \label{eqn:1Final}
            \DF(\cX_k,\cA_k+\cD_k)\to \DF(\cX,\cA+\cD)
        \end{equation}
        as $k\to +\infty$.
        Furthermore, in the Non-Archimedean formalism developed in \cite{BJ23, MP25, MPWN25}, any K\"ahler Fujita Approximation $\{(\cX_k,\cA_k+\cD_k)\}_k$ of $(\cX,\cA+\cD)$ produces a sequence of Non-Archimedean metrics converging \emph{strongly} to the Non-Archimedean metric associated with $(\cX,\cA+\cD)$. Hence
        \begin{equation}
            \label{eqn:2Final}
            \jj^\NA(\cX_k,\cA_k+\cD_k)\longrightarrow \jj^\NA(\cX,\cA+\cD)
        \end{equation}
        as $k\to +\infty$. Indeed, this convergence follows from $\cA_k^n\cdot \langle \cA_k+\cD_k\rangle\to \cA^n\cdot \langle \cA+\cD\rangle$, which can be for instance deduced from \cite[Lem. 5.0.5]{MPWN25}. Combining \eqref{eqn:1Final} with \eqref{eqn:2Final} then shows that uniform $K$-stability implies uniform $K$-stability for models, concluding the proof.
    \end{proof}

    \begin{rmk}\label{rmk:Final}
        A close examination of the proof of Theorem \ref{thmB} shows that we invoke the recent breakthrough Theorem \ref{conj:Ort} only because of \eqref{eqn:Orthogonality}.
        %Although the recent breakthrough of Theorem \ref{conj:Ort} is used to get Theorem
        In particular, Theorem \ref{conj:Ort} is not necessary to obtain Theorem \ref{thmA}. Indeed, for any big test configuration $(\cX,\cA+\cD)$ and for any $\cE$ prime divisor contained in $\mathrm{E}_{\mathrm{nK}}(\cA+\cD)$, the equality
        \begin{equation*}%\label{eqn:NeededB}
        \langle (\mathcal{A}+\mathcal{D})^n\rangle \cdot \cE=0
        \end{equation*}
        follows from combining \cite[Prop. 9.1.2]{MPWN25} with \cite[Thm. 1.1]{Vu23} (cf. also \cite{Nys24}).
        %is not actually necessary for the proof of Theorem \ref{thmA}. Indeed a close examination of the proof of Theorem \ref{thmB} shows that any big test configuration of $(X,\alpha)$ admits a Special K\"ahler Fujita Approximation given by K\"ahler test configurations provided every big test configuration $(\cX,\cA+\cD)$ satisfies
        %\begin{equation}\label{eqn:NeededB}
        %\langle (\mathcal{A}+\mathcal{D})^n\rangle \cdot \cE=0
        %\end{equation}
        %for any $\cE$ prime divisor contained in $\mathrm{E}_{\mathrm{nK}}(\cA+\cD)$. The equality \eqref{eqn:NeededB} follows from combining \cite[Prop. 9.1.2]{MPWN25} with \cite[Thm. 1.1]{Vu23} (cf. also \cite{Nys24}).
    \end{rmk}

{\footnotesize
\bibliographystyle{acm}
\bibliography{main}
}
\end{document}